\newtheorem{theorem}{Theorem}[section]
\newtheorem{conjecture}{Conjecture}
\newtheorem{lemma}{Lemma}[section]
\newtheorem{definition}{Definition}
\newtheorem{corollary}{Corollary}
\newtheorem{remark}{Remark}
\def\haef{Haefliger CR structure}
\def\bR{\mathbf{R}}
\def\bC{\mathbf{C}}
\def\cnk{\mathbf{C}^{n+k}}
\def\scriptO{\mathcal{O}}
\def\scriptM{\mathcal{M}}
\def\scriptB{{\mathcal B}}
\def\scriptR{{\mathcal R}}
\def\scriptF{{\mathcal{F}}}
\def\jone{J^1(M,X )}
\def\Jone{J^1(M,X )}
\def\iff{if and only if }
\def\ctm{{\bf{C}}\otimes T(M)}
\def\ctx{{\bf C}\otimes T(X)}
\def\tonezero{T^{1,0}}
\def\Tonezero{T^{1,0}}
\def\Tzeroone{T^{0,1}}
\def\tzeroone{T^{0,1}}
\def\CN{C^N\ }
\def\dpartial{\bar{\partial}}
\def\nbd{neighborhood\ }
\DeclareMathOperator{\rank}{rank}
\begin{document}

\numberwithin{equation}{section}

\title{CR structures on open manifolds}

\author {Howard Jacobowitz}
\address {Department of Mathematical Sciences, Rutgers University, Camden, New Jersey, USA}
\email {jacobowi@camden.rutgers.edu}

\author {Peter Landweber}
\address{Department of Mathematics, Rutgers University, Piscataway, NJ 08854, USA}
\email{landwebe@math.rutgers.edu}

\subjclass[2010]{Primary 32V05; Secondary 55S35}

\keywords{almost complex, almost CR, Haefliger structure, Gromov's h-principle}
\date{May 7, 2014}
\begin{abstract}
We show that the vanishing of the higher dimensional homology groups of a manifold ensures that every almost CR structure of codimension $k$ may be homotoped to a CR structure.  This result is proved by adapting a method due to Haefliger used to study foliations (and previously applied to study the relation between almost complex and complex structures on manifolds) to the case of (almost) CR structures on open manifolds.

\end{abstract}
\maketitle

\def\neighborhood{neighborhood}
\def\iff{if and only if }
\def\comtanM{C\scriptOtimes T(M)\ }
\def\T10{T^{1,0}C^N}
\def\CN{C^N\ }
\def\dpartial{\bar{\partial}}
\def\bea{\begin{eqnarray*}}
\def\eea{\end{eqnarray*}}

\medskip
\section{Introduction}
An almost complex structure on an even dimensional smooth manifold $M$
is a smooth linear vector bundle map $J$ on the tangent bundle of $M$
satisfying $J^2 = -I$. We pass to the complexified tangent bundle and
denote by $B$ the bundle of eigenspaces of $J$  with eigenvalue $+i$.  
So the elements of $B$
correspond to the anti-holomorphic vectors.  We have
\[
B \cap \overline  B = \{0\} \mbox{  and  }
B \oplus \overline B = \ctm .
\]

These steps are reversible, and in this paper we shall view an almost complex structure as being such a subbundle of the complexified tangent bundle.   


Being given an almost complex structure $B$ on $M$, one says that $B$ defines a complex structure on $M$ if in an open \nbd of each point complex coordinates may be introduced
so that $B$ is spanned by the set of vectors
\[
\frac \partial {\partial\overline{z}_j},\quad j=1,\ldots , \frac 1 2 \dim M.
\]
The Newlander-Nirenberg Theorem \cite{NN} asserts that $B$ defines a
complex structure if and only if $B$ is involutive, by which we mean
that the bracket of two smooth sections of $B$ defined on an open
subset of $M$ is also a section of $B$, i.e., the space $\scriptB$ of 
smooth vector fields with values in $B$ satisfies 
\[
[\scriptB, \scriptB] \subset \scriptB.
\]
More generally, any subbundle of $\ctm$ is said to be involutive if
this condition is satisfied.

This, of course, is reminiscent of the Frobenius condition for
foliations.  It is to be expected that various techniques developed in
the theory of foliations would have analogues whenever subbundles of
$\ctm$ are involutive.
In particular, we ask: When does $M$ admit an involutive subbundle $B$ which satisfies 
$B \cap \overline B = \{0\}$ and has a specified complex dimension?
For almost complex structures these techniques do extend, see \cite{Ad}, \cite{Land}.  Using somewhat different
techniques from
foliation theory, we provide an answer for almost CR structures on manifolds having vanishing homology in high dimensions.  

There are two basic steps to our argument.  Following Haefliger's work in foliation theory, we broaden the definition of an almost CR structure 
to a more flexible category (in Section \ref{Haefliger}) in order to apply (in Section \ref{lift}) homotopy theory to study the existence  of liftings of a continuous map $X \to B$ to a map $X \to E$, where $E \to B$ is a given fibration.
This is used to construct a manifold $X$, foliated by
complex manifolds, and a map $f: \ctm \to \tonezero (X)$ with
appropriate properties.  We then use Gromov's h-principle to prove an
analogue of the Gromov-Phillips Theorem in order to replace $f$ by a map
$F: M \to X$ whose differential has these properties (specifically,
Conditions 1 and 2 of Theorem \ref{gromov}) and thereby induces on $M$
the desired CR structure.

For convenience, here is the main result. Definitions are in the
next section. 

\begin{theorem} \label{mainresult}
If
\[
H_{p}(M^{2n+k};\, \mathbf Z)=0 \,\text{ for }\, p\geq n+k+1
\]
then every smooth   almost CR structure of codimension $k$ on $M$ is
homotopic to a $C^\omega$ CR structure of codimension $k$.  
In particular, every $C^\infty$ CR structure may be deformed to a
$C^\omega$ CR structure.
\end{theorem}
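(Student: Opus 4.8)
The plan is to run, in order, the three-step strategy announced above. Throughout, write $D\subset\ctm$ for the given almost CR structure of codimension $k$: a complex subbundle of fibre dimension $n$ with $D\cap\overline D=\{0\}$ and $D\oplus\overline D$ of complex codimension $k$, so that $\dim M=2n+k$; ``homotopic'' will mean homotopic through such subbundles. The first step (Section~\ref{Haefliger}) is to pass from the pair $(TM,D)$ to a formal \haef\ on $M$ --- that is, a map $\tau$ from $M$ to the classifying space of the linear CR-frame data --- which is to be lifted through the fibration whose total space classifies genuine \haef{}s, i.e.\ those carried by an actual manifold foliated by complex $n$-folds.

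The second step (Section~\ref{lift}) is the homotopy theory. Its input is a connectivity estimate for the fibre of that fibration, in the spirit of Haefliger's theorem that the homotopy fibre of $B\Gamma_q\to BO(q)$ is $q$-connected (and of its complex analogue in \cite{Land}): the fibre is $(n+k)$-connected. Granting this, the successive obstructions to lifting $\tau$ can be nonzero only in the groups $H^{j}\!\bigl(M;\pi_{j-1}(\text{fibre})\bigr)$ with $j\ge n+k+2$, and the universal coefficient theorem --- which is exactly why the hypothesis is stated in homology --- turns the vanishing of $H_p(M^{2n+k};\mathbf{Z})$ for $p\ge n+k+1$ into the vanishing of all of these groups, with any coefficients. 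Hence $\tau$ lifts; unwinding the lift in Haefliger's manner yields a smooth manifold $X$ with a foliation $\scriptF$ by complex $n$-folds, together with the bundle map $f\colon\ctm\to\tonezero(X)$ of the introduction, which is a formal solution of the relation of Theorem~\ref{gromov}.

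The third step applies Gromov's h-principle. Conditions~1 and~2 of Theorem~\ref{gromov} carve out in $\Jone$ an open relation invariant under the pseudogroup of local diffeomorphisms of $M$ --- roughly, that $F\colon M\to X$ be an immersion meeting $\scriptF$ transversally whose differential carries $D\oplus\overline D$ complex-linearly (for its given complex structure) into the tangent bundle of the leaves of $\scriptF$. Since $M$ is open --- which, at least for $n\ge 2$, the homology hypothesis already forces --- the h-principle applies and deforms $f$, through formal solutions, to the $1$-jet of a genuine smooth map $F\colon M\to X$. For such an $F$ the pull-back $D':=dF^{-1}(\tzeroone\scriptF)\subset\ctm$ has constant fibre dimension $n$ with $D'\cap\overline{D'}=\{0\}$, and it is involutive, being locally an intersection of involutive subbundles (common kernels of exact $1$-forms pulled back from $X$); so $D'$ is a genuine CR structure of codimension $k$, and running the homotopy of formal solutions back to $\tau$ exhibits $D'$ as homotopic to $D$. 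Finally, taking the \haef\ on $X$ --- equivalently $\scriptF$ --- real-analytic from the outset and approximating $F$ by a $C^\omega$ map makes $D'$ itself $C^\omega$; the closing sentence of the theorem is the special case in which $D$ is already integrable.

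The step I expect to be the real obstacle is the connectivity estimate of the second step --- proving, in the leafwise-holomorphic (CR) setting, the analogue of the Haefliger--Thurston theorem and calibrating it precisely enough against the homology hypothesis that \emph{every} obstruction to the lift vanishes. By comparison the first step is formal, the third reduces to checking openness and diffeomorphism-invariance of the relation and then quoting Gromov, and the real-analytic refinement is a routine approximation.
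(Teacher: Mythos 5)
Your proposal is correct and follows essentially the same route as the paper: reduce to a lifting problem over the classifying space for Haefliger CR structures, kill the obstructions using the $(n+k)$-connectivity of the homotopy fibre of $\mathrm{cl}\,(\nu_{n,k}):\scriptB_{n,k}\to BGL(n+k)$ together with the universal coefficient theorem, and then realize the resulting formal solution by Gromov's h-principle and a final $C^\omega$ approximation. The only notable divergences are that the connectivity estimate you single out as the real obstacle is precisely what the paper imports from Adachi and Landweber (supplemented by Gromov's h-principle for totally real immersions) rather than proving, and that the paper applies the h-principle via openness and ampleness of the relation in $\jone$ rather than via openness of $M$ (which the homology hypothesis forces anyway for $n\geq 2$, the case $n=1$ being trivial).
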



\section{The basics}
Let $M$ be a manifold of dimension $2n+k$ with $k>0$.  Let $N$
stand for some unspecified manifold of the same dimension, which will
often be just an open ball in $\bR ^{2n+k}$.  All manifolds, bundles,
and maps are of class $C^\infty$, and all manifolds are paracompact, unless otherwise
indicated. 
(It is likely that the results discussed here also hold
with only minimal smoothness assumptions.)  
All open sets which are
introduced to state local results are taken to be connected and
sufficiently small.

\begin{description}
\item
	[An almost CR structure of codimension $k$] on $M$ is a complex subbundle
    $B\subset \ctm$ of complex dimension $n$ that satisfies $B\cap {\overline B}=\{ 0 \} .$
\item 	
	[A CR structure of codimension $k$] is an almost  CR structure $B$ of codimension $k$ that in addition is involutive. 
\item 
	[A generic CR  immersion] is an immersion $f$  of $M$ into a complex
    manifold $X$ of complex dimension $n+k$ such that $(\ctm)\cap f^*\Tzeroone (X )$
    has complex dimension $n$ at all points of $M$.  We set $B=(\ctm)\cap
    f^*\Tzeroone (X)$ and observe that such an immersion induces a CR
    structure on $M$.  Conversely, when the CR structure $B$ is given
    we require that the immersion induces this CR structure.
\end{description}

\begin{remark}
Results similar to ours should hold for a large class of involutive systems.  
\end{remark}

Two things are  well known:
\begin{itemize}
\item
Not all CR structures can be obtained by
immersions into complex manifolds. That is, there exist smooth CR
structures that  are not induced by local
embeddings into $\bC ^{n+k}$. Nirenberg \cite[page 13]{Ni} gave the 
first examples of such non-realizable CR structures.

\item
Real analytic
CR structures are obtained by immersions into $\bC ^{n+k}$ and there
is even an associated uniqueness result.  For the convenience of the
reader we now state and prove this result.
\end{itemize}

\begin{lemma}\label{CR}
If $M$ has a $C^\omega$ CR structure $B$ of codimension  k then there exists an open covering 
\[
M=\bigcup _j \scriptO _j
\]
and $C^\omega $ generic CR  embeddings
\[
f_j:\scriptO _j\to \bC ^{n+k} .\]
Further, for each pair $(i,j)$ with  $\scriptO _i \cap \scriptO _j\neq \emptyset $ there exists an open set $U _{ij}$ containing $f_i(\scriptO _i\cap \scriptO _j )$ and a biholomorphism $\gamma _{ij}:U_{ji}\to U_{ij}$ with
\[
f_i=\gamma _{ij}\circ f_j \text{ on } \scriptO _i\cap \scriptO_j.
\]
\end{lemma}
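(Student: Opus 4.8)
The plan is to deduce the lemma from two classical facts about real-analytic objects: the holomorphic Frobenius theorem, and the observation that a real-analytic CR structure, after complexification, becomes a holomorphic foliation transverse to $M$. Since $M$ is real-analytic, it sits as a totally real submanifold of a complex manifold $\tilde M$ with $\dim_{\bC}\tilde M=\dim_{\bR}M=2n+k$; here ``totally real'' means $T_qM\cap iT_qM=\{0\}$ in $T_q\tilde M$, which is exactly the condition $B\cap\overline B=\{0\}$. Everything below takes place in a neighborhood of $M$ in $\tilde M$, and both assertions of the lemma are local, so I may shrink freely.

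First I would extend $B$ to $\tilde M$ and integrate it. Near a point of $M$ choose a real-analytic frame $L_1,\dots,L_n$ for $B$; since $B$ is a CR structure, $[L_a,L_b]=\sum_c c_{ab}^c L_c$ with real-analytic $c_{ab}^c$, so the $L_a$ extend to holomorphic vector fields $\tilde L_a$ satisfying the same relations on a neighborhood in $\tilde M$. Because $B\cap\overline B=\{0\}$, the $\tilde L_a$ remain pointwise independent and span an involutive rank-$n$ holomorphic subbundle $\tilde B\subset T^{1,0}\tilde M$; on overlaps the various extensions agree along $M$, hence — by uniqueness of holomorphic continuation — on a neighborhood, so $\tilde B$ is well-defined near $M$. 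The holomorphic Frobenius theorem realizes $\tilde B$ as a holomorphic foliation $\mathcal F$ of complex codimension $n+k$. Cover $M$ by small open sets $\scriptO_j$, each lying in a distinguished chart $\tilde{\scriptO}_j$ of $\mathcal F$ with holomorphic quotient submersion $q_j:\tilde{\scriptO}_j\to\bC^{n+k}$, and set $f_j:=q_j|_{\scriptO_j}$.

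Next I would verify that $f_j$ is a $C^\omega$ generic CR embedding inducing $B$. As $q_j$ is holomorphic, $dq_j$ preserves the decomposition into $(1,0)$- and $(0,1)$-parts and has kernel $\tilde B\oplus\overline{\tilde B}$; using that $M$ is totally real one finds, for $L\in\ctm$ over $\scriptO_j$, that $dq_j(L)\in\Tzeroone(\bC^{n+k})$ exactly when $L\in B$. Therefore $(\ctm)\cap f_j^*\Tzeroone(\bC^{n+k})=B$, which has complex dimension $n$, so $f_j$ is generic and induces the given $B$. Moreover the complexified kernel of $df_j$ is $B\cap\overline B=\{0\}$, so $f_j$ is an immersion; after a further shrinking of $\scriptO_j$ it is an embedding. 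This yields the covering $M=\bigcup_j\scriptO_j$ together with the required embeddings $f_j$.

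For the transition maps, take the $\tilde{\scriptO}_j$ to form a fine enough cover by distinguished charts. Then standard foliation theory provides, on each nonempty connected $\tilde{\scriptO}_i\cap\tilde{\scriptO}_j$, a biholomorphism $\gamma_{ij}$ from $U_{ji}:=q_j(\tilde{\scriptO}_i\cap\tilde{\scriptO}_j)$ onto $U_{ij}:=q_i(\tilde{\scriptO}_i\cap\tilde{\scriptO}_j)$ with $q_i=\gamma_{ij}\circ q_j$ there and with $\gamma_{ji}$ as its inverse; restricting to $\scriptO_i\cap\scriptO_j\subset M$ gives $f_i=\gamma_{ij}\circ f_j$, and $U_{ij}\supset f_i(\scriptO_i\cap\scriptO_j)$, as claimed. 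The analytic ingredients — holomorphic Frobenius and holomorphic continuation off a totally real set — are classical; the step I expect to demand the most care is purely organizational, namely arranging a single system of shrinkings under which the complexification, the extension $\tilde B$, the distinguished charts $\tilde{\scriptO}_j$, the submersions $q_j$, and the domains $U_{ij}$ are all simultaneously defined and mutually compatible.
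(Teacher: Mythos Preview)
Your argument is correct and arrives at the lemma, but by a route that differs from the paper's. The paper complexifies only the $k$ directions transverse to $B\oplus\overline B$: choosing coordinates $(x_1,\dots,x_{2n},t_1,\dots,t_k)$ so that $\partial_{x_1},\dots,\partial_{x_{2n}}$ span $B\oplus\overline B$ at the basepoint, it extends the frame for $B$ holomorphically in $t+is$ and adjoins $\partial_{t_m}+i\partial_{s_m}$ to obtain an integrable almost complex structure on an open set of $\bR^{2(n+k)}$, then invokes the real-analytic Newlander--Nirenberg theorem (Eckmann--Fr\"olicher, Libermann) to produce the generic CR embedding directly into $\bC^{n+k}$; the transition biholomorphisms $\gamma_{ij}$ are obtained separately, by holomorphically extending the CR diffeomorphism $f_j\circ f_i^{-1}$ off the generic submanifold (citing Baouendi--Ebenfelt--Rothschild) and then checking injectivity of its differential. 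You instead complexify all $2n+k$ directions to a $(2n+k)$-dimensional complex manifold $\tilde M$ in which $M$ sits totally real, extend $B$ to a holomorphic rank-$n$ involutive subbundle, and apply holomorphic Frobenius; the maps $f_j$ are then restrictions of the distinguished-chart submersions, and the $\gamma_{ij}$ come for free as the holonomy transitions of the holomorphic foliation. Your approach packages the transition data more cleanly, at the cost of an auxiliary ambient space of higher dimension; the paper's is more hands-on and lands in $\bC^{n+k}$ in one step. Two small corrections: the identification of ``$M$ totally real in $\tilde M$'' with ``$B\cap\overline B=\{0\}$'' is a slip (total reality of $M$ in its complexification is automatic and independent of $B$; you actually use $B\cap\overline B=\{0\}$ only to show $\ker df_j=0$), and the pointwise independence of the $\tilde L_a$ follows simply from that of the $L_a$ and openness, not from $B\cap\overline B=\{0\}$.
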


\begin{proof} 
We reduce this lemma to the corresponding result for $C^\omega$ almost
complex manifolds by applying the following theorem.

\begin{theorem}[\cite{EF}, \cite{Li}]
Let $Q_1, \ldots ,Q_N$ be real analytic vector fields in a \nbd of
the origin in  $\bR ^{2N}$ satisfying
\begin{enumerate}

\item $Q_1,\ldots ,Q_N, \overline {Q_1},\ldots \overline{Q_N}$ span
  $\bC\otimes \bR ^{2N}$

\item 
$[Q_i, Q_j] \mbox{ is in the linear span of } \{ Q_1, \ldots , Q_N\}$ \quad
for $1 \leq i,j \leq N$.
 
\end{enumerate}
Then there exist complex coordinates $\{ z_1, \ldots , z_N\}$ on a
possibly smaller \nbd of the origin such that 
\[
 \{ Q_1, \ldots , Q_N\} = \{ \partial _{\overline{z_1}}, \ldots ,
 \partial _{\overline{z_N}}\} .
\]
\end{theorem}

To use this theorem to prove the lemma, let $L_1, \ldots , L_n$ be real
analytic and span the CR bundle $B$ near some point $p\in M$.  Choose
coordinates
\[
x_1, \ldots , x_{2n}, t_1, \ldots , t_k
\]
so that 
\[ 
\partial 
x_1, \ldots ,\partial  x_{2n}
\]
span $B \oplus \overline{B} $ at $p$. First write
\[
L_j= \sum_{p=1}^{2n} \alpha _{jp}(x,t)\partial _{x_k} +\sum_{m=1}^k
\beta _{jm}(x,t)\partial _{t_m}, \quad j=1, \ldots, n,
\]
with coefficients $\alpha$ and $\beta$ in $C^\omega$ 
and then extend each vector field to $\bR ^{2n+2k}$ by
\[
L_j= \sum_{p=1}^{2n} \alpha _{jp}(x,t+is)\partial _{x_k} +\sum_{m=1}^k
\beta _{jm}(x,t+is)\partial _{t_m}.
\]
Let  $\widetilde{B}$ be the linear span of these extended vector fields
together with the vector fields
\[
\partial _{t_m} +i \partial _{s_m}, \ \ \ m= 1, \ldots , k.
\]
Then, using the fact that the coefficients are holomorphic in $t+is$, 
$\widetilde{B}$ is involutive and so the above
Theorem applies:
\[
\widetilde{B}= \{  \partial _{\overline{z_1}}, \ldots ,
 \partial _{\overline{z_{n+k}} }\}.
\]
This gives us an embedding of a \nbd of $p$ in $M$ into $\bC ^{n+k}$
with
\[
B=( \ctm )\cap \tzeroone (\bC ^{n+k}).
\]
Thus the embedding is a generic CR embedding.

To find $\gamma _{ij}$ we assume that $\scriptO _i \cap \scriptO _j$ is
not empty.  Thus we have a diffeomorphism
\[
\Phi : f_i(\scriptO _i \cap \scriptO _j) \to f_j    (\scriptO _i \cap
\scriptO _j)
\]
with $\Phi _* B= B.$  This implies that each component of $\Phi$
is a $C^\omega$ CR function.  That is, for each local section $L$ of $B$
we have 
\[
L\Phi _j =0.
\]

Such a function on a generic submanifold of a complex manifold is the
restriction of a holomorphic function. See, for instance 
\cite[page 29, Corollary 1.7.13]{BER}.  We note for later use that the proof of this 
Corollary also shows that the extension is unique.

Let $\gamma _{ij}$ denote this extension.  If $\gamma _{ij}$ were
not a biholomorphism in some \nbd of $f_i(\scriptO _i \cap \scriptO
_j)$ then there would be a nonzero vector 
\[
v=\sum _
 {m=1}^{n+k} \alpha _{m }\partial z_{m}
\]
annihilated by $d\gamma _{ij}$ at some $p\in f_i(\scriptO _i \cap
\scriptO _j)$.  Since the submanifold is CR generic, $v=T - iJT$ for
some $T\in TM|_p$.  But since $\gamma _{ij}$
 has been extended
holomorphically 
\begin{eqnarray*}
d\gamma v &=& d\gamma T-id\gamma JT\\
&=& d\gamma T- iJ(d\gamma T).
\end{eqnarray*}
So if $\gamma _{ij} $ is not a biholomorphism, $d\gamma _{ij}$
annihilates a nonzero vector in $TM$ and this is impossible because 
$\gamma_{ij}$ is a real diffeomorphism on $f_i(\scriptO _i\cap \scriptO _j)$.
\end{proof}

We note that
\[
\gamma _{ik}=\gamma _{ij}\circ \gamma _{jk}
\]
on the domain $f_k(\scriptO _i\cap \scriptO _j\cap\scriptO _k)$.  To see this  let $p\in f_k(\scriptO _i\cap \scriptO _j\cap\scriptO _k)$.  Then there exists some $x\in \scriptO _i\cap \scriptO _j\cap\scriptO _k$ with $f_k(x)=p$.  Thus
\[
\gamma _{ik}(p)=\gamma _{ik}\circ f _k(x)=f_i(x).
\]
Further, 
\[
\gamma _{ij}\circ\gamma _{jk}(p)=\gamma _{ij}\gamma_{jk}\circ f_k(x)=\gamma _{ij}\circ f_j(x)=f_i(x)=\gamma _{ik}(p).
\]
By the uniqueness of extensions of holomorphic functions
 off of
generic submanifolds,  as
referenced in the previous proof, the fact that 
\[
\gamma _{ik}=\gamma_{ij}\circ\gamma_{jk}
\]
holds on $f_k(\scriptO _i\cap \scriptO _j\cap\scriptO _k)$ implies that it holds wherever it makes sense.


\section{Haefliger  structures}\label{Haefliger}

Following a well-known procedure in foliation theory, as in
\cite{Lawson} or \cite {Milnor} we generalize the objects in Lemma
\ref{CR} by dropping the requirements that the maps are smooth (indeed
analytic) and are embeddings.  As a reminder, we consider only paracompact manifolds.  
Let $A$ be some index set.

\begin{definition}
A Haefliger  CR structure of codimension $k$ on $M^{2n+k}$ consists of 

\begin{itemize}
\item 
An open covering $M=\bigcup _j \scriptO _j$,\ \ $j\in A,$
\item 
continuous maps $f_j:\scriptO _j \to \bC ^{n+k}$, 
\item  
local biholomorphisms $\gamma _{ij}$ of $\bC ^{n+k} $ defined for each pair $(i,j)$ such that $\scriptO _i\cap \scriptO _j\neq \emptyset$ satisfying:
\begin{enumerate}
\item 
$\gamma _{ik}=\gamma _{ij}\circ \gamma _{jk}$ at all points where both sides are defined, and

\item 
$f_i=\gamma _{ij}\circ f_j \text{ on } \scriptO _i\cap \scriptO_j$.
\end{enumerate}
\end{itemize}
\end{definition}
  
Lemma \ref{CR} shows that a $C^\omega$ CR structure admits a Haefliger  CR structure of a special kind.  Namely, each $f_j$ is an embedding.  

Let $A_{ij}$ and $B_{ij}$ denote, respectively, the domain and codomain of $\gamma _{ij}$.   So
 $\gamma _{ij}$ induces a bijection $\gamma _{ij*}: T^{1,0}(A_{ij})\to T^{1,0}(B_{ij})$.  We may restrict $\gamma _{ij*}$ to the image of $\scriptO _i\cap \scriptO _j$ and then pull back to $ M$ to obtain transition functions $g_{ij}$ that patch $\scriptO _i\times \cnk$ to $\scriptO _j\times \cnk $ over $\scriptO _i\cap \scriptO_j$, and so determine a vector bundle:

\begin{definition}
The normal bundle $\nu$ of a Haefliger  CR structure is the complex $(n+k)$-dimensional 
vector bundle over $M$ with transition functions $g_{ij}$.
\end{definition}

\begin{theorem}\label{easy}
If $B$ is a $C^\omega$ CR structure then $(\ctm )/B$ is isomorphic to the normal bundle of a Haefliger  CR structure.
\end{theorem}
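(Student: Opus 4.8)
The plan is to start from the $C^\omega$ CR structure $B$ and invoke Lemma~\ref{CR} to obtain an open covering $M=\bigcup_j \scriptO_j$, generic CR embeddings $f_j:\scriptO_j\to\bC^{n+k}$, and biholomorphisms $\gamma_{ij}:U_{ji}\to U_{ij}$ with $f_i=\gamma_{ij}\circ f_j$ on $\scriptO_i\cap\scriptO_j$, together with the cocycle identity $\gamma_{ik}=\gamma_{ij}\circ\gamma_{jk}$ verified after Lemma~\ref{CR}. This data is exactly a Haefliger CR structure of codimension $k$ on $M$, so its normal bundle $\nu$ is defined, with transition functions $g_{ij}$ obtained by pulling back $(\gamma_{ij})_*:T^{1,0}(U_{ji})\to T^{1,0}(U_{ij})$ to $M$ along $f_j$ over $\scriptO_i\cap\scriptO_j$. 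The task is then to produce a bundle isomorphism $(\ctm)/B \cong \nu$.

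The key step is to build, over each $\scriptO_j$, an explicit isomorphism $\phi_j:(\ctm)/B\big|_{\scriptO_j}\xrightarrow{\ \sim\ } f_j^*T^{1,0}(\bC^{n+k})$ and to check that these patch. Since $f_j$ is a generic CR embedding, $(df_j)^{\bC}:\ctm\to f_j^*(\ct(\bC^{n+k}))$ is injective, and composing with the projection $\ct(\bC^{n+k})\to T^{1,0}(\bC^{n+k})$ kills exactly $B=(\ctm)\cap f_j^*T^{0,1}(\bC^{n+k})$; genericity (complex dimension of $B$ is $n$, so the image has complex dimension $n+k=\dim_{\bC}T^{1,0}$) forces the induced map $\bar\phi_j:(\ctm)/B\to f_j^*T^{1,0}(\bC^{n+k})$ to be an isomorphism of complex vector bundles over $\scriptO_j$. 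On overlaps, $f_i=\gamma_{ij}\circ f_j$ gives $df_i=d\gamma_{ij}\circ df_j$, and since $\gamma_{ij}$ is holomorphic its differential commutes with the complex structure, hence with the projection to the $(1,0)$-part; thus $\bar\phi_i = g_{ij}\circ\bar\phi_j$ over $\scriptO_i\cap\scriptO_j$, which is precisely the compatibility needed for the $\phi_j$'s to glue into a global isomorphism $(\ctm)/B\cong\nu$.

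The only real subtlety — and the step I expect to be the main obstacle — is bookkeeping the identification of $f_j^*T^{1,0}(\bC^{n+k})$ with the abstract bundle built from the cocycle $g_{ij}$, i.e.\ making sure the pullback of $(\gamma_{ij})_*$ along $f_j$ really is the transition cocycle $g_{ij}$ of $\nu$ as defined, and that the patching identity $\bar\phi_i=g_{ij}\bar\phi_j$ is stated over the correct common domain (the image of $\scriptO_i\cap\scriptO_j$, where both $f_i$ and $f_j$ land in the domains/codomains of $\gamma_{ij}$). Once the domains are pinned down, the commutation of $d\gamma_{ij}$ with the $(1,0)$-projection is immediate from holomorphicity, and the cocycle condition for $\{g_{ij}\}$ follows from $\gamma_{ik}=\gamma_{ij}\circ\gamma_{jk}$ by the chain rule. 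Everything else is a routine diagram chase, so the proof is short.
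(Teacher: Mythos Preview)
Your proposal is correct and follows essentially the same route as the paper: invoke Lemma~\ref{CR} to get the Haefliger data, define over each $\scriptO_j$ the map $\ctm\to f_j^*T^{1,0}$ given by $(1,0)$-projection of $df_j$ (the paper writes this as $Y\mapsto f_{j*}Y-iJf_{j*}Y$, which is the same map up to a factor of $2$), identify its kernel as $B$ by genericity, and observe that the resulting local isomorphisms are intertwined by the $g_{ij}$. Your treatment of the gluing step is in fact slightly more explicit than the paper's, which simply notes that the transition functions for $T^{1,0}|_{f_i(M)}$ coincide with the $g_{ij}$.
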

\begin{proof}
By Lemma \ref{CR}, $M$ admits a Haefliger  CR structure.  Each $f_i$ is a
generic CR map, so we have over $\scriptO _i$, 
\[
B\cong f{_i}_*B=(\ctm )\cap \tzeroone .
\]
We claim this implies  
\begin{equation}\label {quotient}
\left( (\ctm )/B \right) \vert _{\scriptO_i} \cong f_i^*T^{1,0} .
\end{equation}
To see this consider the map $\ctm \to \tonezero$ given by, again over
$\scriptO _i$,
\[
\Phi\ :\ Y \mapsto f{_i}_*Y -iJf{_i}_*Y
\]
where $J$ is the standard anti-involution on $\bC ^{n+k}$.  Note that $\Phi (Y)=0$
\iff 
\[
Y\in (\ctm )\cap \tzeroone = f{_i}_*B .
\]
Thus the kernel of $\Phi$ has complex dimension $n$, which implies that
$\Phi$ is surjective.  Then  \eqref{quotient} follows.

But the transition functions for $T^{1,0}|_{f_i(M)}$ are just the derivative maps $g_{ij}$.  Thus  $(\ctm )/B$ is isomorphic to the normal bundle of this Haefliger  CR structure.
\end{proof}
The same conclusion holds if $B$ is homotopic (even through $2n$-plane
bundles which may not be almost CR) to a $C^\omega$ CR structure.

We can now state the first of two basic steps in the proof of our main
theorem.  Here there is no restriction on the manifold.  But for the
second step, Theorem \ref{lifttheorem}, the topology of the manifold is
restricted.  In particular, $M$ needs to be open.

\begin{theorem}\label{CRn}
Let $B$ be a smooth  almost  CR structure of codimension $k$ on $M^{2n+k}$.  If $(\ctm )/B$ is isomorphic to the normal bundle of a Haefliger  CR structure then $B$ is homotopic through almost CR structures of codimension $k$ to a  $C^\omega$ CR structure.
\end{theorem}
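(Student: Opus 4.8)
The plan is to reinterpret the hypothesis as a \emph{formal} generic CR immersion of $M$ into the object built out of the Haefliger cocycle, and then to straighten that formal solution by an h-principle; involutivity and real-analyticity of the resulting structure will come for free from the target. Concretely, I would first fix a \haef{} $(\{\scriptO_j\},\{f_j\},\{\gamma_{ij}\})$ whose normal bundle $\nu$ is isomorphic to $(\ctm)/B$, and, mimicking the foliation-theoretic construction of \cite{Lawson}, \cite{Milnor}, assemble the maps $f_j$ and the local biholomorphisms $\gamma_{ij}$ of $\cnk$ into a manifold $X$ foliated by complex manifolds together with a continuous map $F_0\colon M\to X$ with $F_0|_{\scriptO_j}=f_j$ (the identity $f_i=\gamma_{ij}\circ f_j$ is exactly what lets the $f_j$ glue). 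By the definition of $\nu$ its transition functions $g_{ij}$ are those of $F_0^{*}\Tonezero$ along $F_0$, so $F_0^{*}\Tonezero\cong\nu\cong(\ctm)/B$.

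Next I would produce the formal solution. Composing the quotient projection $\ctm\twoheadrightarrow(\ctm)/B$ with the isomorphism just obtained gives a surjective bundle map $f\colon\ctm\to F_0^{*}\Tonezero$ over $F_0$ with $\ker f=B$. Since $B\cap\overline B=\{0\}$, no nonzero real vector of $T(M)$ lies in $B$, so the associated real bundle map $T(M)\to F_0^{*}T(X)$ is a monomorphism, and its complexification followed by projection onto the $(1,0)$ part has kernel $B$ of complex dimension $n$. In other words $(F_0,f)$ is a formal solution of the differential relation appearing in Theorem \ref{gromov}: it realizes Conditions 1 and 2 of that theorem at the level of bundle maps.

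Then I would invoke the h-principle. By Theorem \ref{gromov} (the analogue of the Gromov--Phillips theorem) the formal solution $(F_0,f)$ is homotopic, through formal solutions, to a genuine one: a $C^{\infty}$ map $F\colon M\to X$ transverse to the complex foliation whose complexified differential, projected to $\Tonezero$, is homotopic to $f$ among surjective bundle maps of complex corank $n+k$ with injective real part. Along such a homotopy the kernels remain complex $n$-planes meeting their conjugates only at $0$, so $B_t:=\ker f_t$ is a homotopy of codimension-$k$ almost CR structures from $B_0=B$ to $B_1=(\ctm)\cap F^{*}\Tzeroone$; and $B_1$ is involutive, hence a CR structure, because it is induced by an immersion transverse to a complex foliation (the converse direction of Lemma \ref{CR}). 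Finally, being such a map is an open condition and the leaves carry canonical real-analytic structures, so I would $C^{\omega}$-approximate $F$ by a real-analytic $F'$ of the same type; the attendant small homotopy carries $B_1$ to the $C^{\omega}$ CR structure induced by $F'$, completing the argument.

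I expect the main obstacle to be Theorem \ref{gromov} itself: one must verify that ``generic CR map transverse to a complex foliation'' is an open, ample differential relation, so that Gromov's convex integration yields the h-principle. Ampleness is where the hypothesis $k>0$ enters, since the target $X$ then has dimension exceeding that of $M$, and this excess supplies the room needed for ampleness — and hence for the h-principle to hold on $M$ with no restriction on its topology, as claimed. A secondary technical point is to make the construction of $X$ precise enough — Hausdorffness of the glued space and compatibility of its foliation with the $\gamma_{ij}$ — that $F_0$ and the identification $F_0^{*}\Tonezero\cong\nu$ hold literally, not merely on the germs where the $\gamma_{ij}$ are defined.
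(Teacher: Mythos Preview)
Your proposal is correct and follows essentially the same route as the paper: build the foliated manifold $X$ from the Haefliger cocycle, use the isomorphism $(\ctm)/B\cong\nu$ to manufacture a surjective bundle map $\ctm\to\Tonezero_f|_M$ with kernel $B$ (the formal solution), invoke the h-principle for an open ample relation to get a genuine map $F$ whose induced kernel $B_1$ is involutive and homotopic to $B$ through almost CR structures, and finish by $C^\omega$ approximation. You also correctly anticipate the two technical points the paper addresses --- Hausdorffness of $X$ near $\iota(M)$ and the verification of ampleness (with the role of $k>0$) --- the latter being handled by reference to \cite{JL}.
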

The next two sections contain the proof of this theorem.
\begin{corollary}
Let $B$ be a smooth CR structure.  Then $B$ is homotopic to a $C^\omega$ CR structure if and only if 
$(\ctm )/B$ is isomorphic to the normal bundle of a Haefliger  CR structure.
\end{corollary}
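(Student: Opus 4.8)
The plan is to obtain this corollary as an immediate consequence of Theorems~\ref{easy} and~\ref{CRn}; it merely repackages the two one-directional statements already at hand into a single biconditional, so essentially all that is required is to line up the hypotheses.

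For the forward implication, assume the smooth CR structure $B$ is homotopic to a $C^\omega$ CR structure $B'$. The quotient $(\ctm)/B$ depends only on $B$ as a subbundle of the fixed bundle $\ctm$, and a homotopy $B_t$ through complex $n$-dimensional subbundles produces a quotient bundle over $M\times[0,1]$; restricting to the two ends and applying homotopy invariance of vector bundles over $M\times[0,1]$ gives $(\ctm)/B\cong(\ctm)/B'$. By Theorem~\ref{easy} applied to $B'$, the bundle $(\ctm)/B'$ is isomorphic to the normal bundle of a Haefliger CR structure, and composing isomorphisms transfers this to $(\ctm)/B$. (This is precisely the content of the remark recorded immediately after the proof of Theorem~\ref{easy}.)

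For the reverse implication, assume $(\ctm)/B$ is isomorphic to the normal bundle of a Haefliger CR structure. A CR structure of codimension $k$ is by definition an almost CR structure of codimension $k$ that is in addition involutive, so in particular $B$ is a smooth almost CR structure of codimension $k$; Theorem~\ref{CRn} then applies verbatim and yields a homotopy of $B$, through almost CR structures of codimension $k$, to a $C^\omega$ CR structure. This is exactly the assertion to be proved.

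The statement carries no genuine difficulty of its own: the only care needed is the bookkeeping above — chiefly that $(\ctm)/B$ is a homotopy invariant of $B$ as a subbundle of $\ctm$, so that the two halves of the biconditional plug correctly into Theorems~\ref{easy} and~\ref{CRn}. All of the real content sits in those two theorems, the second of which is established in the next two sections.
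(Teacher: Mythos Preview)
Your proposal is correct and matches the paper's approach: the corollary is stated without proof, being an immediate consequence of Theorem~\ref{easy} (together with the remark following it, which you invoke) and Theorem~\ref{CRn}. Your write-up spells out precisely the bookkeeping the paper leaves implicit.
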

It is not known if every smooth CR structure is homotopic to a
$C^\omega$ CR structure.  It is possible that this is true on some
manifolds and false on others.  (It is certainly true locally.)  We
shall see in Theorem \ref{main} that it is true provided $M$ satisfies some topological
restrictions.

\section{The CR embedding}
In this section we reduce the proof of Theorem  \ref{CRn} to an
h-principle argument which is then provided in the next section.

\begin{lemma} Given a Haefliger CR structure on $M^{2n+k}$ there
exists a manifold $X$ of real dimension $4n+3k$ and an embedding $\iota
:M\to X$ such that
\begin{enumerate}
\item 
$X$ is a fiber bundle over $M$ with complex structure on the
fibers.\label{X1}
\item 
$X$ admits a foliation $\scriptF ^{2n+k}$ transverse to the fibers.\label{X2}
\item 
The normal bundle $\nu$ of the Haefliger structure is isomorphic to\label{X3}
\[
\iota^*T(X)/T(M).
\]
\end{enumerate}

\end{lemma}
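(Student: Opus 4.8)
The plan is to adapt Haefliger's thickening construction, which builds a foliated manifold out of a Haefliger cocycle (as in \cite{Milnor}, \cite{Lawson}, and, in the almost complex case, \cite{Land}), replacing the model space $\bR^q$ and its local diffeomorphisms by $\cnk$ and its local biholomorphisms. Concretely, for each $j\in A$ I would choose an open neighborhood $W_j$ of the graph $\{(x,f_j(x)) : x\in\scriptO_j\}$ inside $\scriptO_j\times\cnk$, and build $X$ by gluing $W_i$ to $W_j$ over $\scriptO_i\cap\scriptO_j$ via the transition rule $(x,w)\mapsto(x,\gamma_{ij}(w))$. The cocycle identity $\gamma_{ik}=\gamma_{ij}\circ\gamma_{jk}$ makes these transitions compatible, so (after the shrinking discussed below) $X$ is a manifold and the first-coordinate projections assemble into a map $\pi:X\to M$ whose fibers are open subsets of $\cnk$; hence $\dim_{\bR}X=(2n+k)+2(n+k)=4n+3k$. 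The identity $f_i=\gamma_{ij}\circ f_j$ shows the local sections $x\mapsto(x,f_j(x))$ agree on overlaps, so they define a global section $\iota:M\to X$, which is automatically an embedding since it is a section of $\pi$. The gluing maps are smooth because the $\gamma_{ij}$ are biholomorphic, so $X$, $\pi$, and the structures built below are smooth; only $\iota$ inherits merely the regularity of the $f_j$, which is all that is needed, as the h-principle argument of the next section will replace it.

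Conditions \eqref{X1} and \eqref{X2} are then read off from the local model. Because every transition map fixes the first coordinate and acts on the fiber coordinate $w$ by the biholomorphism $\gamma_{ij}$, the complex structures on the fibers $\pi^{-1}(x)$ are well defined, giving \eqref{X1}. For \eqref{X2}, in each chart the ``horizontal'' slices $W_j\cap(\scriptO_j\times\{w\})$ form a foliation of dimension $2n+k$; since $\gamma_{ij}(w)$ does not depend on $x$, the transition maps carry horizontal slices to horizontal slices, so these local foliations patch to a global foliation $\scriptF$ of $X$ of dimension $2n+k$, and in each chart $T(X)=T(\scriptF)\oplus\ker(d\pi)$, so $\scriptF$ is transverse to the fibers of $\pi$.

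For \eqref{X3}, write $V=\ker(d\pi)\subset T(X)$ for the vertical subbundle, a complex vector bundle via the fiberwise complex structures of \eqref{X1}. Over $\scriptO_j$ the bundle $\iota^*V$ is trivialized by the standard coordinates on the fiber $\cnk$, and the transition to the chart over $\scriptO_i$ is obtained by differentiating the rule $(x,w)\mapsto(x,\gamma_{ij}(w))$ in $w$, giving the complex-linear maps $\gamma_{ij*}$ along $f_j(\scriptO_i\cap\scriptO_j)$, pulled back to $M$ --- exactly the functions $g_{ij}$ defining $\nu$. Hence $\iota^*V\cong\nu$. Finally, since $\pi\circ\iota=\mathrm{id}_M$, the differential $d\pi$ yields a short exact sequence $0\to\iota^*V\to\iota^*T(X)\to T(M)\to 0$ of bundles over the paracompact manifold $M$, which identifies $\iota^*T(X)/T(M)$ with $\iota^*V\cong\nu$, proving \eqref{X3}.

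The one genuinely delicate point is the very first step: arranging the neighborhoods $W_j$ --- after passing to a locally finite refinement of the cover, using paracompactness of $M$ --- so that the domains on which the various maps $(x,w)\mapsto(x,\gamma_{ij}(w))$ are defined fit together consistently and $X$ is a Hausdorff manifold rather than a non-Hausdorff identification space. This is precisely the bookkeeping already carried out in Haefliger's construction for foliations, and it transfers verbatim; everything else is a direct translation of the real-codimension-$q$ picture into the complex setting.
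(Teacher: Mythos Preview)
Your proposal is correct and follows essentially the same construction as the paper: both build $X$ by gluing the pieces $\scriptO_j\times\cnk$ (or neighborhoods of the graphs of the $f_j$) via $(x,w)\mapsto(x,\gamma_{ij}(w))$, obtain $\iota$ from the local sections $x\mapsto(x,f_j(x))$, and read off (1)--(3) from the form of the transition maps. The only cosmetic differences are that the paper works in coordinate charts $\phi_i(\scriptO_i)\times\cnk$ and handles the Hausdorff issue by a direct two-line argument near $\iota(M)$ rather than invoking Haefliger's general bookkeeping; your remark that $\iota$ inherits only the continuity of the $f_j$ is in fact a point the paper leaves implicit.
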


\begin{proof}
Let $M$ have local charts\[
\phi _i:\scriptO _i\to \bR ^{2n+k}.
\]
Set $\Omega$ equal to the disjoint union 
\[
\Omega = \bigsqcup _i \, (\phi _i(\scriptO _i)\times \bC ^{n+k}),
\]
and define an equivalence relation on $\Omega$ by setting
\[
(x_i,y_i)\sim (x_j,y_j)
\]
whenever
\begin{eqnarray}
x_j &=& \phi _j\phi _i^{-1} (x_i) \quad \mathrm{and}\\
y_j &=& \gamma _{ji}(y_i).
\end{eqnarray}
Here $\gamma _{ji}$ are the local biholomorphisms of the Haefliger
structure.  
Denote the quotient space, with the quotient topology, by $Z$. Note
that the projection map
\[  \Omega \to Z
\]
is an open map and that the projection map
\begin{equation}\label{pi}
\pi\ :\ Z\to M
\end{equation}
is well-defined.   The
multi-valued map of $M$ into $\Omega$
\[
p\mapsto \bigsqcup _i \,(p,f_i(p))
\]
for $p\in \scriptO _i$ passes down to a well-defined map 
\[
\iota :M\to Z.
\]
Clearly, $\pi\circ\iota$ is the identity.

We shall see that $Z$ is Hausdorff near $\iota (M)$. This provides 
the manifold $X$.  To show that $Z$ is Hausdorff near the image of $M$
let $a=[(x_i,y_i)] $ and $b=[(x_j,y_j)]$ be points of $Z$.  If
$x_j\neq \ \phi _j\phi _i^{-1}(x_i)$, then we can separate $a$ and $b$ by disjoint open sets.  So now
let $a=[(x_i,y_i)]$ and $b=[( \phi _j\phi _i^{-1}(x_i), y_j)]$.  We may assume
(by staying close enough to $\iota (M)$) that $y_i$ is in the domain of
$\gamma _{ji}$.  Thus if $y_j\neq \gamma_{ji} (y_i)$ we may again
separate $a$ and $b$ while if  $y_j= \gamma_{ji} (y_i)$ then $a=b$.

Note that the transition functions for $X$ are
\begin{equation} \label{X}
\psi _{ji} (x,y)=(\phi _j\phi _i^{-1}(x), \gamma_{ji}(y))
\end{equation}
for $x\in \phi _i(\scriptO _i\cap \scriptO_j)$ and $y$ close enough to $\gamma _{ji}(f_i)$.  This
implies that the two natural foliations on $\Omega$ corresponding to
each factor in $\phi _i(\scriptO_i)\times \bC^{n+k}$ pass down to $X$.  In
particular, the complex structure on each fiber $\pi ^{-1}(p)$ is
well-defined. (Here the map $\pi$ of \eqref{pi}is restricted to
$X\subset Z$.)
Thus
\eqref{X1} and \eqref{X2} of the Lemma hold. Further, the transition functions for $T(X)$ are
\[
\psi_{ji*}(x,y)=((\phi _j\phi_i^{-1})_*(x),\gamma_{ji*}(y)).
\]
Restricting to $\iota (M)$ we have
\bea
\psi_{ji*}(x)&= ((\phi _j\phi_i^{-1})_*(x),\gamma_{ji*}(f_i(x)))\\
&= ((\phi _J\phi_i^{-1})_*,g_{ji}(f_i(x))).
\eea
So the transition functions for the normal bundle of $\iota (M)\subset
X$ are the same as those for $\nu$, the normal bundle of the Haefliger
structure.  This establishes \eqref{X3}.
\end{proof}

To  explain what comes next, we  review a well-known
procedure in foliation theory.  Let $\scriptM$ have a foliation
$\scriptF$ of codimension $q$.  If 
$\Phi : M\to \scriptM$ 
is transverse to the leaves of $\scriptF$, in the sense that $\Phi_*T(M)$
and $T(\scriptF)$ together span $T(\scriptM)$ at each point of the image
of $M$, then, as is easily seen, $\scriptF$ induces a foliation on $M$, of the same 
codimension.  By the Gromov-Phillips Theorem (see, for instance, \cite[Section 4, pp.~293--297]{Milnor} 
when $M$ is open, it is enough to have a bundle map
\[
\Psi : T(M) \to T(\scriptM)
\]
satisfying the tranversality condition, for then the base map of
$\Psi$ may be deformed to a transverse map $\Phi$.  

We rephrase this as: An open manifold $M$ admits a codimension-$q$
foliation provided there exists a vector bundle map $\Psi$ such that the
composition
\[
T(M) \stackrel{\Psi}{\longrightarrow}T(\scriptM)\longrightarrow 
T(\scriptM) /T(\scriptF) 
\]
is surjective, where $\scriptF$ is a codimension-$q$ foliation of $\scriptM$.  

To state the modification of the Gromov-Phillips Theorem we use, we
first need some definitions.   Let $\pi : X \to M$ be the projection provided by the Lemma.   
Recall that each fiber $\pi ^{-1}(p)$ is a complex manifold.  Set

\[
T_f =\bigcup _{p\in M}T (\pi ^{-1}(p))\subset T(X)
\]
and
\[
\tonezero _f =\bigcup _{p\in M}\tonezero (\pi ^{-1}(p))\subset \ctx.
\]
The subscript is to remind us that we are looking at tangents to the
fiber. The form of the transition functions for $X$, see \eqref{X}, 
shows that the map
\[
\mu \ :\ \ctx\to\Tonezero _f
\]
is well-defined.

Although the following is patterned upon the Gromov-Phillips Theorem,
the restriction to open manifolds is not necessary here. 
\begin{theorem}\label{gromov}
If there exists a surjective bundle map 
\[
\Psi: \ctm \to \tonezero _f\vert _M 
\]
then there exists a smooth map
\[
F:M\to X
\]
such that

\begin{enumerate}
\item The composition 
\[
\mu F_* :\ctm \to \ctx \to \Tonezero _f
\]
is surjective.
\item
$\mu _*F_*$ is injective when restricted to $T(M)$.
\item
$B_1= \ker \mu F_*$ is homotopic to $B= \ker \Psi $.
\end{enumerate}
\end{theorem}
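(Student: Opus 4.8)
The plan is to deduce this statement from the Gromov–Phillips-type $h$-principle for foliations, applied to the manifold $X$, the transversely holomorphic foliation $\scriptF^{2n+k}$ furnished by the previous Lemma, and a carefully built bundle map. The key observation is that the foliation $\scriptF$ on $X$ is transverse to the fibers of $\pi$, so $T(X)=T(\scriptF)\oplus T_f$; consequently the complexified decomposition gives $\ctx = (\ct)(\scriptF)\oplus (T^{1,0}_f\oplus T^{0,1}_f)$, and the projection $\mu:\ctx\to\Tonezero_f$ is exactly the composite of the projection $\ct(X)\to \ct(X)/(\ct(\scriptF)\oplus T^{0,1}_f)$ with the canonical identification of that quotient with $\Tonezero_f$. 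Thus asking that $\mu F_*$ be surjective is the same as asking that the real underlying map of $F$ be transverse to the complex analogue of $\scriptF$; more precisely, surjectivity of $\mu F_*$ onto the rank-$(n+k)$ complex bundle $\Tonezero_f$ is equivalent to surjectivity of the induced real bundle map $T(M)\to T(X)/T(\scriptF)$ onto the rank-$(2n+2k)$ real bundle $T_f$. Since $\dim_{\bR} M = 2n+k < 2n+2k$, this is a genuinely open (non-equidimensional) transversality condition, which is precisely the setting in which the $h$-principle holds with no openness hypothesis on $M$.

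First I would build the formal (bundle-level) solution. Starting from the given surjective $\Psi:\ctm\to\Tonezero_f|_M$, I need a bundle map $T(M)\to T(X)$ covering some base map into $X$ whose composition with $T(X)\to T(X)/T(\scriptF)\cong T_f$ is surjective. The bundle map should be real-linear; $\Psi$ is complex-linear on $\ctm$, and I recover a real bundle map by observing that a real bundle map $\widehat\Psi:T(M)\to T_f$ corresponds exactly to a complex bundle map $\ctm\to\Tonezero_f$ (complexify the source, use that a real homomorphism into a complex vector space extends uniquely $\bC$-linearly on the $T^{1,0}$ part), and this correspondence carries surjectivity to surjectivity. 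Then I promote $\widehat\Psi:T(M)\to T_f\subset T(X)$ to a bundle map $T(M)\to T(X)$ by composing with the inclusion; the base map here is the projection to $M$ followed by $\iota:M\to X$, or more flexibly any section-like map — what matters is that we have a monomorphism-free bundle map transverse to $T(\scriptF)$.

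Next I would invoke the $h$-principle. The Gromov–Phillips theorem (in the form in \cite[Section 4, pp.~293--297]{Milnor}, or Gromov's general microflexibility argument, or Thurston's extension to closed manifolds) asserts that if $M$ admits a bundle map $T(M)\to T(X)$ whose composite with $T(X)\to T(X)/T(\scriptF)$ is surjective, then it is homotopic through such bundle maps to one that is the differential $F_*$ of an honest smooth map $F:M\to X$; here we use the \emph{non-equidimensional} version ($\operatorname{codim}\scriptF = 2n+2k > \dim M - \dim T_f\cap\ldots$), so that, as the paragraph preceding the theorem emphasizes, openness of $M$ is not needed. The homotopy of bundle maps keeps the composite-with-quotient surjective throughout, and keeps its kernel isomorphic as a bundle; this is what yields item (3): $B_1 = \ker(\mu F_*)$ is homotopic to $\ker\Psi = B$ through the rank-$n$ complex subbundles of $\ctm$ that are the kernels along the homotopy. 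For item (1), surjectivity of $\mu F_*$ is immediate since $F_*$ is the endpoint of the homotopy. For item (2), I would need to arrange in addition that $F$ is an immersion with $F_*T(M)$ meeting $T(\scriptF)$ only in the image of $T(M)$ transversally — equivalently that the real map $T(M)\to T_f$ underlying $\mu F_*$ is injective: since $\dim_\bR M = 2n+k$ and $\operatorname{rank}_\bR T_f = 2n+2k$, the generic immersion theorem / the same $h$-principle with the monomorphism condition added (a Gromov open relation, again valid without openness of $M$ in this codimension range) lets us perturb $F$ so that $\mu_*F_*|_{T(M)}$ is injective while preserving (1) and (3).

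The main obstacle is getting all three conditions \emph{simultaneously} from a single application of the $h$-principle: surjectivity of $\mu F_*$ onto $\Tonezero_f$, injectivity of its real restriction to $T(M)$, and control of the kernel up to homotopy. The cleanest route is to phrase this as a single differential relation $\scriptR$ on $J^1(M,X)$ — the set of $1$-jets $(x,y,\phi)$ with $\phi:T_xM\to T_yX$ such that the composite $T_xM\xrightarrow{\phi} T_yX\to (T_yX/T_y\scriptF)$ is a linear isomorphism onto $T_f$ when $\dim$ permits, or more precisely is injective \emph{and} the induced complex map on $T^{1,0}_f$ is surjective — and check that $\scriptR$ is open and invariant under the relevant pseudogroup so that Gromov's theorem applies; the formal solution constructed in the first step is a section of $\scriptR$ lying over the bundle $\Psi$, and the $h$-principle converts it to a holonomic one, i.e. to $F$. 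Verifying openness of $\scriptR$ is routine linear algebra; the one point requiring care is that the two rank conditions are compatible, i.e. that the formal solution can be chosen to satisfy both at once — this follows because $\widehat\Psi$ is already surjective onto $T_f$ and a surjection $T(M)\to T_f$ between bundles with $\operatorname{rank} T(M) = 2n+k \ge$ (the $(2n+k)$-dimensional transverse complement inside $T_f$) can be perturbed to have the extra injectivity on a complementary sub-bundle, using that $2n+k\le 2n+2k$ leaves enough room.
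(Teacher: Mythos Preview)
Your main reduction fails on a dimension count. You claim that surjectivity of $\mu F_*:\ctm\to T^{1,0}_f$ is equivalent to surjectivity of the real map $T(M)\to T(X)/T(\scriptF)\cong T_f$, but $\dim_{\bR}T(M)=2n+k$ while $\rank_{\bR}T_f=2n+2k$, so the latter surjectivity is \emph{impossible}. (Under the correspondence you describe, complex surjectivity of the extension $\ctm\to T^{1,0}_f$ means only that the image of the real map spans $T_f$ over $\bC$, not over $\bR$.) Hence the real Gromov--Phillips transversality theorem for the foliation $\scriptF$ simply does not apply: no map $M\to X$ can be transverse to $\scriptF$ in the real sense, and the ``non-equidimensional'' remark points in the wrong direction.

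What the paper does instead is work directly with the complex rank conditions. It sets up the relation
\[
\scriptR=\Bigl\{(p,c,a^j_i):\ \rank(a)=n+k,\ \rank\begin{pmatrix}a\\ \overline a\end{pmatrix}=2n+k\Bigr\}\subset J^1(M,X),
\]
the first condition encoding surjectivity of $\mu F_*$ and the second encoding injectivity on $T(M)$, and then invokes Gromov's convex integration after checking that $\scriptR$ is open \emph{and ample} (the ampleness is the nontrivial point, established in \cite{JL}). Ampleness is precisely what allows the $h$-principle to hold with no openness hypothesis on $M$; your fallback paragraph gestures at a relation $\scriptR$ but checks only openness and ``invariance under the pseudogroup,'' which is not enough. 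Without ampleness (or without $M$ open and some microflexibility argument) you have no mechanism to pass from the formal section built out of $\Psi$ to a holonomic one, and your treatment of condition (2) by a further perturbation has the same gap.
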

This  is the heart of the proof of Theorem \ref{CRn}. We prove
it in the next section by means of Gromov's h-principle.  Here let us
first see how it implies the conclusion of Theorem \ref{CRn} and then how we use the hypothesis
\[
(\ctm )/ B \cong \nu
\]
to obtain the surjective map
\[
\Psi: \ctm \to \tonezero _f\vert _M .
\]
We first show that $B_1$ defines a CR structure.

\begin{lemma}\label{involutive}
The bundle $B_1$ is involutive.  
\end{lemma}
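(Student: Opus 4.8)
\textbf{Plan for proving Lemma \ref{involutive}.}
The plan is to show that $B_1 = \ker(\mu F_*)$ is involutive by identifying it, locally, with the pullback under $F$ of an involutive distribution on $X$, namely $\ker\mu = \Tonezero_f$ restricted to $F(M)$, intersected with $\ctm$. First I would unwind the definitions: a complex tangent vector $Y \in \ctm|_p$ lies in $B_1$ precisely when $\mu(F_*Y) = 0$, i.e.\ when $F_*Y \in \ker\mu = \Tonezero_f$ at $F(p)$. Since $F$ is a map (the base map of the bundle map, after the h-principle deformation), $F_*$ commutes with Lie brackets: $F_*[Y,Y'] = [F_*Y, F_*Y']$ for sections $Y, Y'$ of $B_1$ (here one works along $F(M)$, extending vector fields if necessary, or more cleanly one notes that $B_1$ being the kernel of a bundle map composed with $F_*$ means a section $Y$ of $B_1$ pushes to a section of $\Tonezero_f$ along $F(M)$ that extends locally). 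So the whole question reduces to: is $\Tonezero_f$, the bundle of $(1,0)$-vectors tangent to the fibers $\pi^{-1}(p)$, an involutive subbundle of $\ctx$?

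The key point is that this follows from the structure of $X$ established in the Lemma of the previous section: $X$ is built by patching pieces $\phi_i(\scriptO_i) \times \bC^{n+k}$ by transition functions $\psi_{ji}(x,y) = (\phi_j\phi_i^{-1}(x), \gamma_{ji}(y))$, where $\gamma_{ji}$ is a biholomorphism of $\bC^{n+k}$. In each such chart, $T_f$ is spanned by the vector fields $\partial/\partial y^1,\dots,\partial/\partial y^{n+k}, \partial/\partial\bar y^1,\dots,\partial/\partial\bar y^{n+k}$, and $\Tonezero_f$ is spanned by $\partial/\partial\bar y^1,\dots,\partial/\partial\bar y^{n+k}$ (using the convention of the paper in which $\Tzeroone$-type brackets span the antiholomorphic directions, cf.\ the generic CR embedding definition where $B = (\ctm)\cap f^*\Tzeroone(X)$). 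These commute among themselves, so $\Tonezero_f$ is trivially involutive in each chart; because $\gamma_{ji}$ is holomorphic it respects the splitting into $\partial_{\bar y}$-span, so the subbundle $\Tonezero_f$ is globally well-defined on $X$ (this is exactly the well-definedness of $\mu$ already noted). Hence $[\scriptB_1, \scriptB_1] \subset \scriptB_1$ pulls back the identity $[\Tonezero_f, \Tonezero_f] \subset \Tonezero_f$.

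The main obstacle — really the only subtlety — is the bookkeeping about pushing forward vector fields under $F$ when $F$ is merely a smooth map and not an immersion onto a submanifold: $F_*Y$ is a section of $\ctx$ only along $F(M)$, and one must justify that $F_*[Y,Y'] = [F_*Y, F_*Y']$ makes sense and that membership in $\Tonezero_f$ (a condition at each point of $F(M)$) is preserved. The clean way to handle this is to observe that Condition (2) of Theorem \ref{gromov} guarantees $F$ is an immersion (since $\mu_*F_*$ is injective on $T(M)$, certainly $F_*$ is injective on $T(M)$), so locally $F(M)$ is a submanifold and we may extend each section of $B_1$ to a genuine vector field on a neighborhood in $X$ tangent to $F(M)$; then the bracket of two such extensions is tangent to $F(M)$ and, being a bracket of sections of the involutive bundle $\Tonezero_f + (\text{something?})$ — no, more simply, we choose the extensions to lie in $\Tonezero_f$ along $F(M)$ and argue that a vector field which is a section of $B_1$ along $F(M)$ and tangent to $F(M)$ can be arranged to lie in the involutive bundle $\Tonezero_f$ in a neighborhood, whence its bracket does too, and restricting back to $F(M)$ and pulling back by $F_*^{-1}$ gives a section of $B_1$. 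I would spell this out carefully but expect no genuine difficulty.
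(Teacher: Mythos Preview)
Your identification $\ker\mu = T^{1,0}_f$ is wrong: $\mu$ is the \emph{projection} $\ctx\to T^{1,0}_f$, so $T^{1,0}_f$ is its image, not its kernel. In a product chart $\phi_i(\scriptO_i)\times\bC^{n+k}$ with coordinates $(x,z)$, $T^{1,0}_f$ is spanned by $\partial_{z_k}$ (this is made explicit in the paper's own computation $\mu F_*a = a(y_k)\partial_{z_k}$), while $\ker\mu$ is spanned by the $\partial_{x_j}$ \emph{together with} the $\partial_{\bar z_k}$. Your attempt to salvage this by reinterpreting $T^{1,0}_f$ as the antiholomorphic span still misses the horizontal $\partial_{x_j}$ directions. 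The upshot is that $\ker\mu$ \emph{is} involutive (coordinate vector fields in a product chart, preserved by the transition maps $\psi_{ji}$), so your pull-back strategy can be repaired once you describe $\ker\mu$ correctly; but as written the key identification is false.

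Even after the repair, your route is more laborious than the paper's. The paper bypasses the whole pushforward/extension discussion by working entirely on $M$: writing $F(x)=(f(x),y(x))$ in a chart, one has $\mu F_*a = a(y_k)\partial_{z_k}$, so $B_1=\{a\in\ctm: a(y_k)=0 \text{ for all }k\}$, where the $y_k$ are now just smooth complex-valued functions on (an open subset of) $M$. Involutivity is then the one-line identity $[a,b](y_k)=a(b(y_k))-b(a(y_k))=0$. This avoids ever having to push vector fields into $X$, extend them off $F(M)$, or invoke that $F$ is an immersion. What your (corrected) approach buys is a more conceptual picture---$B_1$ as the $F$-pullback of the globally involutive distribution $\ker\mu$---but the paper's computation is both shorter and free of the bookkeeping you yourself flagged as the main obstacle.
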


\begin{proof}
 It suffices to assume that $a$ and $b$ are local sections of $B_1$ and to  show that
\[
\mu F_*[a,b]=0.
\]
We work in local coordinates.  So we write 
\[
F(x)=(f(x),y(x)) \in \phi _i(\scriptO)\times\bC ^{n+k}.
\]
For any $a\in \ctm$, we have (note that we now use $a(y)$ to mean the action of the vector $a$ on the function $y$ and we employ the summation convention)
\[
F_*(a) = a(f_j)\partial _{x_j}+a(y_k)\partial
_{z_k}+a(\overline{y_k})\partial _{\overline{z_k}}
\]
and 
\[
\mu F_*=a(y_k)\partial _{z_k} .
\]

It follows that
\[
B_1= \ker \mu  F_* =\{a\in \ctm : a(y_k)=0 \text{ for }k=1, \ldots
, n+1\}
\]
and this is involutive.
\end{proof}

\begin{lemma}
\[
B_1\cap \overline {B_1} = \{ 0\}.
\]
\end{lemma}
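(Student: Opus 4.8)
The plan is to compute $B_1 \cap \overline{B_1}$ in the same local coordinates used in the proof of Lemma \ref{involutive}, and to exploit the injectivity statement of Condition (2) of Theorem \ref{gromov}. Recall that in local coordinates $F(x) = (f(x), y(x)) \in \phi_i(\scriptO) \times \bC^{n+k}$, and we found
\[
B_1 = \ker \mu F_* = \{ a \in \ctm : a(y_k) = 0 \text{ for } k = 1, \ldots, n+k \}.
\]
Taking complex conjugates, since $\overline{a(y_k)} = \bar a(\overline{y_k})$, we get
\[
\overline{B_1} = \{ a \in \ctm : a(\overline{y_k}) = 0 \text{ for } k = 1, \ldots, n+k \}.
\]
So a vector $a \in B_1 \cap \overline{B_1}$ is exactly one satisfying $a(y_k) = a(\overline{y_k}) = 0$ for all $k$; equivalently, in terms of the real and imaginary parts of the $y_k$, the vector $a$ annihilates all $2(n+k)$ real coordinate functions pulled back via $F$ from the fiber direction.

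First I would observe that for such an $a$, the pushforward $F_*(a) = a(f_j)\partial_{x_j} + a(y_k)\partial_{z_k} + a(\overline{y_k})\partial_{\overline{z_k}}$ reduces to $F_*(a) = a(f_j)\partial_{x_j}$, so $F_*(a)$ is tangent to the base direction of $X$ (the $\phi_i(\scriptO)$ factor). Applying $\mu$, which projects onto $\tonezero_f$, the fiber direction, we get $\mu F_*(a) = 0$ automatically — but more is true: $F_*(a)$ lies in the complexified span of the $\partial_{x_j}$, which is transverse to the fiber tangents. Now I invoke Condition (2): $\mu_* F_*$ — here I read this as asserting that $F_*$ followed by the projection $T(X) \to T(X)/T_f$ is injective on the real tangent bundle $T(M)$, equivalently $F_*(T(M)) \cap T_f = \{0\}$. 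Since $a \in B_1 \cap \overline{B_1} \subset \ctm$ with $F_*(a)$ purely in the base direction, I need to descend to the real picture: write $a = X_1 + iX_2$ with $X_1, X_2 \in T(M)$; then $F_*(X_1), F_*(X_2)$ are each purely in the complexified base direction as well (since $F_*$ is real-linear and the base/fiber splitting is real). The transversality of Condition (2) forces $F_*(X_1) = F_*(X_2) = 0$, hence $X_1 = X_2 = 0$ since $F$ is an immersion (which follows from (2): $F_*$ restricted to $T(M)$ is injective because even its composition with the projection killing $T_f$ is). Therefore $a = 0$.

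The main obstacle I anticipate is making precise the relationship between the algebraic condition "$a(y_k) = a(\overline{y_k}) = 0$" and the geometric transversality of Condition (2), in particular being careful that $B_1 \cap \overline{B_1}$ is a \emph{real} subbundle (it equals $C \otimes V$ for the real subbundle $V = \{ a \in T(M) : F_*(a) \in \text{base direction} \}$) so that the real transversality statement of (2) can be applied. A clean way to organize this: show first that $a \in B_1 \cap \overline{B_1}$ if and only if $F_*(a) \in \bC \otimes (\text{base tangents})$, i.e. $F_*(a)$ has no component in $T_f \otimes \bC$; then note $\ker(\ctm \xrightarrow{F_*} \ctx \to \ctx/(T_f \otimes \bC)) = \bC \otimes \ker(T(M) \xrightarrow{F_*} T(X) \to T(X)/T_f)$, and this kernel is zero precisely by Condition (2). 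Combined with the fact that $F$ is an immersion, this yields $B_1 \cap \overline{B_1} = \{0\}$, completing the verification that $B_1$ is an almost CR structure of codimension $k$, and together with Lemma \ref{involutive} a CR structure of codimension $k$.
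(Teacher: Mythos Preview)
The gap lies in your reading of Condition~(2). The map $\mu$ defined just before Theorem~\ref{gromov} is the projection $\ctx \to T_f^{1,0}$ onto the $(1,0)$ part of the \emph{fiber} tangents, not a projection $T(X)\to T(X)/T_f$ onto base directions; the subscript star in ``$\mu_*F_*$'' in (2) is a typographical slip for $\mu F_*$, the same composition appearing in (1) and (3). Thus Condition~(2) says exactly that $\ker(\mu F_*)\cap T(M)=\{0\}$, i.e.\ $B_1\cap T(M)=\{0\}$. Under your reinterpretation as ``$F_*(T(M))\cap T_f=\{0\}$'' the step ``the transversality of Condition~(2) forces $F_*(X_1)=F_*(X_2)=0$'' does not follow: you have shown that $F_*(X_j)$ lies in the base direction, but vectors in the base direction are precisely those \emph{not} in $T_f$, so your transversality statement says nothing about them. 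The same slip recurs in your ``clean'' reformulation, where you identify $B_1\cap\overline{B_1}$ with $\ker\bigl(\ctm\to\ctx/(\bC\otimes T_f)\bigr)$; in fact $a\in B_1\cap\overline{B_1}$ means $F_*(a)$ has \emph{no} fiber component, so it generally maps to something nonzero in $\ctx/(\bC\otimes T_f)$.

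Once (2) is read correctly the argument is immediate, and this is all the paper means by ``a consequence of the second condition'': the subspace $B_1\cap\overline{B_1}\subset\ctm$ is conjugation-invariant, hence equals $\bC\otimes(B_1\cap T(M))$, and $B_1\cap T(M)=\ker(\mu F_*|_{T(M)})=\{0\}$ by (2). In your coordinates, (2) says that for a real vector $v$ the equations $v(y_k)=0$ for all $k$ force $v=0$; combining $a(y_k)=0$ and $a(\overline{y_k})=0$ for $a=X_1+iX_2$ and using $X_j(\overline{y_k})=\overline{X_j(y_k)}$ gives $X_1(y_k)=X_2(y_k)=0$, hence $X_1=X_2=0$.
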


\begin{proof}
This is a consequence of the second condition of the Theorem.
\end{proof}

Thus $B_1$ is  a CR structure homotopic to $B$.  (The proof of Theorem
\ref{gromov} will show that this homotopy is through almost CR
structures.)

Finally, we need the map $\Psi$.

\begin{lemma}\label{f}
If $(\ctm )/B$ is isomorphic to the Haefliger normal bundle $\nu$ then 
there exists a surjective bundle map 
\[
\Psi: \ctm \to \tonezero _f|_M
\]
with $B= \ker \Psi$.
\end{lemma}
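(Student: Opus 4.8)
The plan is to produce $\Psi$ by combining the given isomorphism $(\ctm)/B \cong \nu$ with a canonical identification of $\nu$ with $\tonezero_f|_M$. First I would recall, from the construction of $X$ in the previous Lemma, that the normal bundle $\nu$ of the Haefliger structure has transition functions $g_{ij}$ obtained by pulling back $\gamma_{ij*}$ on $T^{1,0}(\bC^{n+k})$ along the maps $f_i$; and that the fiber $\pi^{-1}(p)$ of $X \to M$ is a copy of a neighborhood in $\bC^{n+k}$ glued by these same $\gamma_{ij}$. Consequently the vertical bundle $\tonezero_f$, restricted to $\iota(M)$, has exactly the transition functions $g_{ij}$ as well — this is essentially the computation at the end of the previous proof identifying the transition functions of $\iota^*T(X)/T(M)$ with those of $\nu$, now carried out on the $(1,0)$-part of the vertical tangent bundle. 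Hence there is a bundle isomorphism $\nu \xrightarrow{\ \sim\ } \tonezero_f|_M$ covering the identity on $M$.

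Next I would define $\Psi$ as the composite
\[
\ctm \longrightarrow (\ctm)/B \xrightarrow{\ \sim\ } \nu \xrightarrow{\ \sim\ } \tonezero_f|_M,
\]
where the first arrow is the quotient projection, the second is the hypothesized isomorphism, and the third is the identification just described. The quotient projection is surjective with kernel $B$, and the two isomorphisms are fiberwise bijective, so $\Psi$ is surjective with $\ker\Psi = B$, as required. Both $(\ctm)/B$ and $\tonezero_f|_M$ have complex rank $n+k$, so the dimension count is consistent; I should note that the quotient $(\ctm)/B$ does inherit a complex bundle structure since $B$ is a complex subbundle, so the isomorphism with $\nu$ is genuinely an isomorphism of complex vector bundles, matching the complex structure on the fibers of $X$.

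The main obstacle — and really the only substantive point — is the bookkeeping in the first step: verifying that $\tonezero_f|_M$ carries the transition functions $g_{ij}$, i.e. that the well-defined map $\mu : \ctx \to \tonezero_f$ restricts over $\iota(M)$ to a bundle whose cocycle is precisely the one defining $\nu$. This follows by differentiating the transition formula \eqref{X} for $X$ in the vertical ($y$) directions and evaluating along $y = f_i(x)$, exactly as in the computation of $\psi_{ji*}$ at the end of the previous Lemma, but it must be checked that passing to the $(1,0)$-summand is compatible with the gluing — which it is, because each $\gamma_{ji}$ is a biholomorphism and so $\gamma_{ji*}$ preserves the $T^{1,0}$/$T^{0,1}$ splitting. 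Once that is in place, the rest is the formal composition above.
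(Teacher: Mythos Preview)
Your proposal is correct and follows the same route as the paper: define $\Psi$ as the composite $\ctm \to (\ctm)/B \xrightarrow{\alpha} \nu \xrightarrow{\beta} \tonezero_f|_M$ and observe that since $\alpha,\beta$ are isomorphisms, $\Psi$ is surjective with kernel $B$. The paper's proof is in fact terser than yours---it simply names the isomorphism $\beta$ without justification---so your verification that $\tonezero_f|_M$ carries the transition functions $g_{ij}$ is a welcome elaboration rather than a departure.
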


\begin{proof}
We have
\[
\ctm \to ( \ctm )/B  \stackrel{\alpha}{ \to}\nu \stackrel {\beta}{\to} \tonezero _f|_M.
\]
Since $\alpha$ and $\beta$ are isomorphisms,  the composite map $\Psi$
is surjective and has kernel $B$.
\end{proof}


\section{An application of the h-principle}
The general set-up is to define a subset $\scriptR \subset \Jone$,
called a relation, with the property that $j^1(F)\in \scriptR$ implies
that $ F$ satisfies the conclusions of Theorem \ref{gromov}.  See
\cite{EM}, \cite{Gr}, \cite{Ja} for discussions of the h-principle.

Again choosing local coordinates $x\in \bR ^{2n+k}$ for $M$, $y\in \bR ^{2n+k}$, and  $z\in \bC ^{n+k}$ for $\nu$ we see that 
\[
\mu F_* : \ctm \to \tonezero _f
\]
is surjective if the rank of the matrix
\[
\left(\frac {\partial z_i}{\partial x_j}\right),\quad\quad 1\leq i\leq n+k \text{ and } 1\leq j\leq 2n+k
\]
is maximal (i.e., equal to $n+k$) and that
\[
\mu  F_* : T(M) \to \tonezero _f
\]
is injective if the rank of the $(2n+2k)\times (2n+k)$ matrix 
\[
\left(\begin{array}{c}
\frac {\partial z_i}{\partial x_j}\\
\\
\frac {\partial \overline{z_i}}{\partial x_j}
\end{array}
\right)
\]
is also maximal.

So we define our relation by
\[
\scriptR = \{ (p,c,a^j_i): \rank (a) =n+k,\quad \rank \left(\begin{array}{c}
a\\
\overline{a}
\end{array}
\right) = 2n+k\}
\]
where, as above
\[ 1\leq i\leq n+k,\ \ 1\leq j\leq 2n+k.
\]
We claim that $\scriptR$ is an open and ample subset of $\Jone$.  For
the definition of \textit{ample} see any of the above cited works or \cite{JL};
for the proof of the claim see \cite {JL},  pages 157--159.  It is a consequence of the h-principle that the desired $F$ exists provided we can find a section
\[
\sigma\ :\ M\to \scriptR .
\]
That is, the existence of a formal solution implies the existence of a genuine solution. 

To see that we do have a formal solution, we identify a bundle map
\[
\sigma\ :\ \ctm \to \tonezero _f
\]
with a section
\[
\tilde{\sigma }\ :\ M\to \jone
\]
by writing
\[
\sigma (\partial _{x_j}\vert _p)=h^j_i\partial _{z_i}\vert _{\sigma (p)}
\]
and then setting
\[
\tilde {\sigma }(p)=(p,\sigma (p),h^j_i).
\]
As long as $\sigma$ is surjective and its kernel satisfies $B\cap
\overline{B}=\{ 0\}$, so 
\[
\sigma :\ T(M)\to T_f^{1,0}
\]
is injective, 
 $\tilde{\sigma }$ is a formal solution,
\[
\tilde{\sigma }\ :\ M\to \scriptR \subset \jone .
\]
In this context, Lemma \ref{f} asserts the existence of a formal solution.

The h-principle asserts more than the existence of a genuine solution.  In fact, the genuine solution is obtained by a deformation
\[
\sigma _t\ :\ M\to \scriptR \ \ \ 0\leq t\leq 1
\] 
such that $\sigma _0=\tilde{\sigma }$ and $\sigma _1=j^1(F)$.
Now set 
\[
B_t=\text{ ker } \mu \sigma _t.
\]
We have $\rank B_t=n$ and $B_t\cap \overline{B_t}=\{ 0\}$ for all $t$, and moreover $B_1$ is involutive.

Thus $B_t$ represents a deformation of $B$ through almost CR
structures to a true   CR structure.

Finally, once we have $F:M\to X  $ we may deform $F$, maintaining the open conditions on the ranks of the matrices, to a $C^\omega$ map.

The proof of Theorem \ref{CRn} is complete.


\section{Classifying spaces}

We first note that the definition of a Haefliger CR structure makes
sense (for fixed $n$ and $k$) if $M$ is merely a topological space.
To specify $n$ and $k$  we  speak of an $(n,k)$ \haef.  (Actually, only
$n+k$ appears in the definition, so we occasionally speak of an $(n+k)$ Haefliger structure.)  
Also if $M$ and $N$ are topological
spaces and $N$ has an $(n,k)$ \haef \ then any 
 continuous map $\phi:M\to N$ induces an $(n,k)$ Haefliger  CR
 structure on $M$.
 
\begin{definition}
 Two $(n,k)$ Haefliger  CR structures on $N$ are homotopic if
 there exists an   $(n,k)$ Haefliger  CR structure on $N\times I$
 which pulls back to the given structures on $N\times \{0\}$ and
 $N\times \{1\}$.
\end{definition}

\begin{definition}
A topological space $\scriptB_{n,k}$ with an $(n,k)$ Haefliger  CR structure  is a classifying space for\ $(n,k)$  Haefliger  CR structures if 
\begin{itemize}
\item Every $(n,k)$ Haefliger  CR structure  on a manifold $N$ is induced by a continuous map $\phi :N\to \scriptB_{n,k}$.
\item Two $(n,k)$ Haefliger  CR structures on $N$ are homotopic  if and only if the corresponding continuous maps into $\scriptB_{n,k}$ are homotopic.
\end{itemize}
The associated $(n+k)$-dimensional normal bundle over $\scriptB_{n,k}$ is denoted by $\nu_{n,k}$.
\end{definition}

\begin{theorem}
There exists a classifying space for $(n,k)$ Haefliger  CR structures.
\end{theorem}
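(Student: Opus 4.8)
The plan is to recognize an $(n,k)$ Haefliger CR structure as a $\Gamma$-structure in the sense of Haefliger, where $\Gamma$ is the \'etale topological groupoid of germs of local biholomorphisms of $\cnk$, and then to take for $\scriptB_{n,k}$ Haefliger's classifying space $B\Gamma$. First I would set up $\Gamma$ carefully: its space of objects is $\cnk$, its space of morphisms is the set of germs at points of $\cnk$ of biholomorphisms between open subsets of $\cnk$, topologized so that a basic neighborhood of a germ consists of the germs $\{\,\gamma_p : p\in U\,\}$ of a single biholomorphism $\gamma:U\to V$; with this topology the source and target maps $\Gamma^{(1)}\to\Gamma^{(0)}=\cnk$ are local homeomorphisms and composition is continuous. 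Then I would observe that, under the standing convention that all overlaps $\scriptO_i\cap\scriptO_j$ are connected and sufficiently small, the data of a Haefliger CR structure on $M$ are exactly the data of a continuous $\Gamma$-cocycle on $M$: from the biholomorphisms $\gamma_{ij}$ one reads off the continuous families of germs $x\mapsto(\gamma_{ij})_{f_j(x)}$, and conversely, the source and target maps of $\Gamma$ being local homeomorphisms, over a sufficiently small connected overlap a continuous family of germs is the family of germs of a single biholomorphism. Since ``homotopy'' of Haefliger CR structures --- a structure on $M\times I$ restricting to the given ones at the ends --- is then precisely concordance of the associated $\Gamma$-cocycles, the theorem reduces to the assertion that $\Gamma$-cocycles are classified up to concordance by maps into a fixed space.

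For the second step I would build $\scriptB_{n,k}=B\Gamma$ as the geometric realization $|N_\bullet\Gamma|$ of the nerve of $\Gamma$ --- the simplicial space whose space of $q$-simplices is the space of composable $q$-tuples of morphisms of $\Gamma$, with face maps given by composition and by deletion of the outer morphisms, and degeneracies given by insertion of identities --- and equip it with the tautological $\Gamma$-cocycle $\Gamma_{\mathrm{univ}}$, which, by the already-noted fact that the definition of a Haefliger CR structure makes sense over any topological space, is a universal $(n,k)$ Haefliger CR structure. The substance of the theorem, and the step I expect to be the main obstacle, is the classification assertion: for every paracompact space $N$ --- in particular every manifold, since all of ours are paracompact and hence carry partitions of unity --- the map $[\phi]\mapsto[\phi^{*}\Gamma_{\mathrm{univ}}]$ is a bijection from $[N,\scriptB_{n,k}]$ onto the set of homotopy classes of $(n,k)$ Haefliger CR structures on $N$. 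I would prove surjectivity by Milnor's barycentric-coordinate construction: given a structure subordinate to $\{\scriptO_i\}$ with partition of unity $\{\rho_i\}$, send a point $x$ in the support of $\rho_{i_0},\dots,\rho_{i_q}$ to the point of $|N_\bullet\Gamma|$ determined by the composable tuple $(\gamma_{i_0i_1}(x),\dots,\gamma_{i_{q-1}i_q}(x))$ together with the barycentric coordinates $(\rho_{i_0}(x),\dots,\rho_{i_q}(x))$, then check that this is well defined and continuous and pulls $\Gamma_{\mathrm{univ}}$ back to the given structure; injectivity, and the ``only if'' half of the homotopy statement, follow by running the same construction over $N\times I$. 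This is verbatim Haefliger's argument for germs of diffeomorphisms of $\bR^q$ as presented in \cite{Lawson} and \cite{Milnor}, and nothing in it is affected by replacing $\bR^q$ with $\cnk$ and ``diffeomorphism'' with ``biholomorphism'', so I would invoke it rather than reproduce it.

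Finally I would identify the normal bundle $\nu_{n,k}$. A germ $\gamma_p$ of a biholomorphism has a well-defined complex Jacobian at its source point $p$, and these Jacobians assemble into a continuous action of $\Gamma$ on the trivial complex $(n+k)$-plane bundle $\cnk\times\cnk\to\cnk$ covering the tautological action on objects; the associated bundle over $\scriptB_{n,k}$ --- the realization of the nerve of the translation groupoid of this action, which maps to $|N_\bullet\Gamma|$ --- is a complex $(n+k)$-plane bundle whose transition data, read off over the chart attached to any cocycle, are exactly the derivative maps $g_{ij}$ used to define the normal bundle of a Haefliger CR structure. Hence it is the normal bundle of $\Gamma_{\mathrm{univ}}$; denoting it $\nu_{n,k}$, one gets for free that $\phi^{*}\nu_{n,k}$ is the normal bundle of the Haefliger CR structure classified by $\phi$, which is what the applications need. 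The one non-formal ingredient in the whole argument is the classification assertion of the second paragraph --- Haefliger's representability theorem --- and that is where the real work lies.
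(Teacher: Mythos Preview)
Your proposal is correct and takes essentially the same approach as the paper: both recognize an $(n,k)$ Haefliger CR structure as a $\Gamma$-structure for the topological groupoid $\Gamma=\Gamma_{n+k}^{\bC}$ of germs of biholomorphisms of $\cnk$, and take $\scriptB_{n,k}=B\Gamma$ via Haefliger's classifying-space theorem. The paper simply cites Haefliger \cite{Haef1,Haef2} (and the expositions \cite{La2,Milnor}) and identifies $\scriptB_{n,k}$ with the space $B\Gamma_{n+k}^{\bC}$ already studied in \cite{Ad,Land}, whereas you spell out the groupoid, the nerve construction, and the barycentric classifying map; but the substance is the same.
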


This result is a particular case  of  the existence of classifying
spaces for topological groupoid structures (as proved by Haefliger \cite{Haef1}, \cite{Haef2} and
others---see \cite[page 48]{La2} or \cite[page 312]{Milnor} for
expository accounts).  Indeed, 
these are precisely the classifying spaces  for complex structures studied by
Landweber \cite{Land} and by Adachi \cite{Ad} and usually denoted by $B\Gamma_q^{\bC}$.  In the present case, the codimension $q$ is taken to be $n+k$.  
Here $\Gamma_q^{\bC}$ denotes the topological groupoid of germs of biholomorphisms between open subsets of 
$\bC ^q$.


\section{The lifting problem}\label{lift}

We have started with a complex subbundle $B\subset \ctm$ and have set $\nu= (\ctm)
/B$.  Using a Hermitian metric on $\ctm$ we may also consider $\nu$ as a
subbundle of $\ctm$ complementary to $B$ in $\ctm$.  Our aim is to show that the subbundle $\nu
\subset \ctm$ of rank $n+k$ is the normal bundle of some $(n+k)$
Haefliger structure on $M$.  Since bundles over $M$ are classified by homotopy
classes of maps of $M$ into $BGL(n+k)$ (we denote by $GL(m)$ the complex general linear group 
$GL(m,\mathbf C)$ ) and Haefliger structures are
classified by homotopy classes of maps of $M$ into $\scriptB_{n,k}$, the
following set-up is natural.
For any complex vector bundle $\xi$ of rank $m$ over a paracompact space $X$, let
$\mathrm{cl}\,(\xi): X \to BGL(m)$ denote a classifying map for $\xi$.  
Thus we have the commutative
diagram

\[
\renewcommand{\arraystretch}{2.0}
\begin{array}{ccc}
M&\xrightarrow{\scriptstyle{\mathrm{cl}\,(\nu)\times \mathrm{cl}\,(B) }}&BGL(n+k)\times BGL(n)\\
&\textrm{cl}\,(\ctm)
\searrow &\Bigg\downarrow\rlap{$B(\mathrm{i})$}\\
& & BGL(2n+k)  
\end{array}
\]
where i denotes the inclusion of $GL(n+k) \times GL(n)$ into $GL(2n+k)$ by means of block matrices and 
$B(\textrm{i})$ is the map of classifying spaces induced by this inclusion.  

As above, let
$\nu _{n,k}$ denote the normal bundle on the classifying space
$\scriptB_{n,k}$.  Note that $\nu _{n,k}$ defines a map $\mathrm{cl}\,(\nu_{n,k}) : \scriptB_{n,k} \to BGL(n+k)$,
unique up to homotopy.

\begin{theorem}\label{lifttheorem}
If
\begin{equation}
H_{p}(M^{2n+k};\, \mathbf Z)=0 \,\text{ for }\, p\geq n+k+1\label{eqH=0}
\end{equation}
then  there exists a lift
\[\renewcommand{\arraystretch}{2.0}
\begin{array}{ccc}
& &\scriptB_{n,k} \times BGL(n)\\
&
\nearrow&\Bigg\downarrow\rlap{$\mathrm{cl}\,(\nu_{n,k}) \times  B(\mathrm{id)}$}\\
M&\xrightarrow{\scriptstyle{\mathrm{cl}\,(\nu)\times \mathrm{cl}\,(B)}}&BGL(n+k)\times BGL(n).\\
\end{array}\]
\end{theorem}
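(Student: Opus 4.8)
The plan is to reduce the lifting problem to an obstruction-theory computation for the map $\mathrm{cl}\,(\nu_{n,k}) : \scriptB_{n,k} \to BGL(n+k)$, and to show that under the hypothesis \eqref{eqH=0} all the relevant obstructions vanish for dimension reasons. First I would observe that since the second factor of the target, $BGL(n)$, is being hit by the identity, a lift in the stated square exists if and only if the map $\mathrm{cl}\,(\nu) : M \to BGL(n+k)$ lifts through $\mathrm{cl}\,(\nu_{n,k})$. So the problem is: given $\mathrm{cl}\,(\nu) : M \to BGL(n+k)$, find $\tilde g : M \to \scriptB_{n,k}$ with $\mathrm{cl}\,(\nu_{n,k}) \circ \tilde g \simeq \mathrm{cl}\,(\nu)$. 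Here one uses that $\nu$ has complex rank $n+k$, so its classifying map does land in $BGL(n+k)$, and that a Haefliger CR structure with normal bundle $\nu$ is exactly such a lift (compare the discussion of $B\Gamma_q^{\bC}$ in the previous section, with $q = n+k$).

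Next I would replace $\mathrm{cl}\,(\nu_{n,k})$ by a fibration and analyze its homotopy fiber $\mathcal{F}$. The key input — this is the analogue for $\Gamma^{\bC}$ of Haefliger's and Thurston's results, as used by Landweber and Adachi — is a connectivity statement: the fiber $\mathcal{F}$ of $B\Gamma_{n+k}^{\bC} \to BGL(n+k)$ is highly connected, so its homotopy groups $\pi_j(\mathcal{F})$ vanish for $j$ below some bound and, more to the point, standard obstruction theory puts the obstructions to lifting $\mathrm{cl}\,(\nu)$ over $M$ in the groups $H^{j+1}(M; \pi_j(\mathcal{F}))$. The strategy is then to show these groups vanish once $j$ is large enough that the fiber contributes, and for small $j$ to show the fiber is connected enough that there is no obstruction at all. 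The precise numerology should match: obstructions live in cohomology of $M$ in degrees $> n+k$, i.e. $p \geq n+k+1$, which by \eqref{eqH=0} and the universal coefficient theorem (homology vanishing in degrees $\geq n+k+1$ with $\mathbf Z$ coefficients forces cohomology vanishing in those degrees with arbitrary coefficients — this is the reason the hypothesis is phrased in homology, per the authors' own remark in the abstract's commented note) all vanish. One must also check $M$ has the homotopy type of a CW complex of dimension $\leq 2n+k$, which is automatic for a manifold, but the real point is that the vanishing hypothesis lets us truncate: cohomology of $M$ with any coefficients is concentrated in degrees $\leq n+k$.

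The main obstacle I expect is pinning down exactly which homotopy groups of the fiber of $B\Gamma_{n+k}^{\bC} \to BGL(n+k)$ one needs to control, and citing the correct connectivity/surjectivity statement for transversely holomorphic Haefliger structures — this is where Haefliger's classification theorem and the Phillips–Gromov submersion theorem (in the transversely holomorphic category, as in Landweber \cite{Land} and Adachi \cite{Ad}) do the heavy lifting. Concretely I would: (i) recall that $\mathcal{F} \to \scriptB_{n,k} \to BGL(n+k)$ is a fibration up to homotopy; (ii) invoke the known result that the relevant obstruction to factoring a rank-$(n+k)$ bundle's classifying map through $B\Gamma_{n+k}^{\bC}$ lies in $H^p(M; -)$ for $p > n+k$ (the "codimension $q = n+k$" range); (iii) apply \eqref{eqH=0} plus universal coefficients to kill every such group; (iv) conclude the lift exists and assemble it with the identity on the $BGL(n)$ factor. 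A secondary technical point is the base-point/orientation bookkeeping in obstruction theory (twisted coefficients if $\mathcal{F}$ is not simply connected), but since all target groups vanish outright this does not actually bite.
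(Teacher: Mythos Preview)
Your plan is essentially the paper's proof: reduce to the first factor, replace $\mathrm{cl}\,(\nu_{n,k})$ by a fibration, cite the $(n+k)$-connectivity of the homotopy fiber $\mathcal{F}$ of $B\Gamma_{n+k}^{\bC}\to BGL(n+k)$ (Landweber, Adachi, with Gromov's h-principle for totally real immersions), place the obstructions in $H^{j+1}(M;\pi_j(\mathcal{F}))$ for $j\geq n+k+1$, and kill them via \eqref{eqH=0} and the universal coefficient theorem.

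The one point that needs sharpening is your last sentence on twisted coefficients. The twisting in obstruction theory is governed by the action of $\pi_1$ of the \emph{base}, not by $\pi_1(\mathcal{F})$; and your claim that ``all target groups vanish outright'' does not follow from \eqref{eqH=0} if the coefficient system is genuinely twisted, since the universal coefficient theorem you invoke applies only to ordinary (untwisted) coefficients. The paper handles this cleanly: $GL(n+k,\bC)$ is connected, so $BGL(n+k)$ is simply connected, hence the fibration has simple local coefficients, and therefore so does its pullback to $M$. With that observation in place your UCT step goes through exactly as you wrote it.
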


\begin{proof}
The homotopy fiber $\scriptF = \scriptF_{n,k}$ of the map
$\mathrm{cl}\,(\nu_{n,k}) : \scriptB_{n,k} \to BGL(n+k)$
is ($n+k$)-connected (cf. \cite{Ad} and \cite{Land}, although the latter paper needs to be supplemented by Gromov's h-principle for totally real immersions, as given in \cite{GR} and \cite{Gr}).
Since the obstructions to lifting a map
\begin{equation}\label{map}      
M\to BGL(n+k)\times BGL(n)
\end{equation}
lie in $H^{j+1}(M;\,\{\pi _j(\scriptF)\})$ for
the local coefficient system $\{ \pi_j(\scriptF )\}$, and since $\pi
_j(\scriptF )=0$ for $ 0\leq j\leq n+k$, 
the nonzero obstructions can
lie only in $H^{j+1}(M;\,\{ \pi _j(\scriptF)\})$ for $j\geq n+k+1$.
(For discussion of local coefficient systems, also known as bundles of coefficients or systems of local groups, we refer to Steenrod's book \cite{St51} on fiber bundles, or to his paper \cite{St43}.  For a more recent account dealing with fibrations, see G.~Whitehead's book \cite[Chapter VI]{Whitehead}.) 
Since  complex general linear groups are connected, their classifying spaces are simply connected, 
and so the map 
$\mathrm{cl}\,(\nu_{n,k}) : \scriptB_{n,k} \to BGL(n+k)$, which we
view as a fibration, has simple local coefficient systems in the sense 
of \cite{St43}.  This means that there is a unique isomorphism between
the homotopy groups of any two fibers.  
It follows that the local coefficient systems over $M$ appearing here are also simple.
As a result, the cohomology groups here are ordinary cohomology groups
and we can apply the usual universal coefficient theorem which
expresses cohomology in terms of integral homology (note that 
homotopy  groups beyond the first one are abelian and so the universal
coefficient theorem applies).
Hence if \eqref{eqH=0} holds then by the universal coefficient theorem
we also have
\[
 H^{j+1}(M;\, \{\pi _j(\scriptF)\}) = 0 \text{ for } j\geq n+k+1
\]
and so every map \eqref{map} lifts to a map 
\[
M\to \scriptB_{n,k} \times BGL(n).
\]
\end{proof}

\begin{remark}
The hypothesis of Theorem \ref{lifttheorem} can be slightly weakened,
Namely, it suffices to require that the integral homology groups in
(\ref{eqH=0}) vanish for $p \geq n+k+2$, and be free abelian for $p =
n+k+1$.  In particular, this weakened hypothesis is satisfied if the
manifold $M^{2n+k}$ has the homotopy type of a CW complex of dimension
at most $n+k+1$.  This last condition is satisfied if $M^{2n+k}$ has
the homotopy type of an open $(n+k+2)$-manifold, which follows from
\cite[Lemma 1]{Ph}.  In short, if $M^{2n+k}$ is homotopic to an open
$(n+k+2)$ manifold, then the lift exists and Theorem \ref{mainresult} holds in
this slightly more general case.

\end{remark}

\section{The theorem}

\begin{theorem}\label{main}
If $M^{2n+k}$ satisfies the condition \eqref{eqH=0} of Theorem \ref{lifttheorem} then
every smooth   almost CR structure of codimension $k$ on $M$ is
homotopic to a $C^\omega$ CR structure of codimension $k$.  
In particular, every $C^\infty$ CR structure may be deformed to a
$C^\omega$ CR structure.
\end{theorem}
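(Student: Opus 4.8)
The plan is to obtain Theorem \ref{main} as a formal consequence of the two main steps already in place: the lifting result (Theorem \ref{lifttheorem}) and the geometric reduction (Theorem \ref{CRn}). First I would fix a smooth almost CR structure $B \subset \ctm$ of codimension $k$ on $M^{2n+k}$ and put $\nu = (\ctm)/B$; choosing a Hermitian metric on $\ctm$ lets me view $\nu$ as a rank-$(n+k)$ subbundle of $\ctm$ complementary to $B$, so that $\ctm \cong \nu \oplus B$. This splitting is precisely what makes the triangle of classifying maps in Section \ref{lift} commute, i.e.\ $\mathrm{cl}(\ctm) \simeq B(\mathrm{i}) \circ (\mathrm{cl}(\nu) \times \mathrm{cl}(B))$, so that the hypotheses of Theorem \ref{lifttheorem} are met.

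Next I would invoke Theorem \ref{lifttheorem}: since $M$ satisfies \eqref{eqH=0}, the map $\mathrm{cl}(\nu) \times \mathrm{cl}(B) : M \to BGL(n+k) \times BGL(n)$ lifts along $\mathrm{cl}(\nu_{n,k}) \times B(\mathrm{id})$ to a map $\Phi : M \to \scriptB_{n,k} \times BGL(n)$. Composing $\Phi$ with the projection onto $\scriptB_{n,k}$ yields $\phi : M \to \scriptB_{n,k}$, which, by the defining property of the classifying space, pulls the universal $(n,k)$ Haefliger CR structure back to a Haefliger CR structure on $M$ whose normal bundle is $\phi^* \nu_{n,k}$. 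Because $\Phi$ is a lift, $\mathrm{cl}(\nu_{n,k}) \circ \phi \simeq \mathrm{cl}(\nu)$, and hence $\phi^* \nu_{n,k} \cong \nu = (\ctm)/B$ as complex vector bundles over $M$. Thus $(\ctm)/B$ is isomorphic to the normal bundle of a Haefliger CR structure on $M$, which is exactly the hypothesis of Theorem \ref{CRn}.

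Applying Theorem \ref{CRn} then gives that $B$ is homotopic, through almost CR structures of codimension $k$, to a $C^\omega$ CR structure of codimension $k$; this is the first assertion. The ``in particular'' clause is then immediate: a $C^\infty$ CR structure is in particular a smooth almost CR structure of codimension $k$, so the first assertion applies to it directly and produces the deformation to a $C^\omega$ CR structure. Note that Theorem \ref{CRn} imposes no topological condition on $M$ --- only Theorem \ref{lifttheorem} consumes \eqref{eqH=0} --- which is why the hypothesis of Theorem \ref{main} is stated exactly as \eqref{eqH=0}.

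Since Theorems \ref{lifttheorem} and \ref{CRn} carry all the real content, the only place calling for genuine care is the bookkeeping in the middle paragraph: translating ``the lift $\Phi$ exists'' into ``$(\ctm)/B$ is a Haefliger normal bundle.'' The hard part there is keeping the naturality of $\nu_{n,k}$ under pullback and the commutativity of the lifting square aligned, so that the isomorphism $\phi^*\nu_{n,k} \cong \nu$ really does follow from $\mathrm{cl}(\nu_{n,k}) \circ \phi \simeq \mathrm{cl}(\nu)$; everything else is assembly of results proved in the preceding sections.
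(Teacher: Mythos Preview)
Your proposal is correct and follows essentially the same route as the paper: use Theorem \ref{lifttheorem} to produce a map $\phi:M\to\scriptB_{n,k}$ whose pulled-back Haefliger structure has normal bundle $\phi^*\nu_{n,k}\cong\nu=(\ctm)/B$, and then invoke Theorem \ref{CRn}. The paper phrases the middle step slightly differently (via $\nu^0\oplus B^0\cong\ctm$ and the observation that the vertical map is the identity on the $BGL(n)$ factor) and spells out the $C^\omega$ approximation at the end, but since the $C^\omega$ conclusion is already built into the statement of Theorem \ref{CRn} your citation suffices.
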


\begin{proof}
The lifting $M\to \scriptB_{n,k} \times BGL(n)$ of Theorem \ref{lifttheorem}
shows that $\nu \oplus B$ is isomorphic to some bundle 
$\nu^0 \oplus B^0$ with $\nu ^0$ the normal bundle of some $(n,k)$ Haefliger structure
on $M$ and $B^0$ a complex bundle of rank $n$.  The map 
$\mathrm{cl}\,(\nu) \times \textrm{cl}\,(B)$ is homotopic to the map $\mathrm{cl}\,(\ctm)$ and so, using
the fact that the diagram is commutative we see that 
\[
\nu ^0\oplus B^0\cong \ctm .
\]
In particular, $\nu ^0$ and $B^0$ can be identified with subbundles of
$\ctm $.  Further, since the map $\mathrm{cl}\,(\nu_{n,k})\times B(\textrm{id})$ 
is the identity on the $BGL(n)$ factor, we have that $\nu$
is isomorphic to $\nu ^0$. 
Thus Theorem
\ref{CRn} may be applied to show that  $B$ may be deformed to some CR structure.  Further, since
this CR structure is induced by a map $F:M\to X$, as in Theorem
\ref{gromov}, the map $F$ itself may be deformed to yield a $C^\omega$
CR structure.  This last step just uses that the real analytic
functions are dense in the set of smooth functions.
\end{proof}

Note that the requirement that the integral homology of $M$ vanish in
high dimensions excludes compact manifolds, provided that $n>1$.  In
the case $n=1$ the theorem holds for a trivial reason:  Each almost CR
structure with $n=1$ is automatically involutive.

\begin{remark}
This theorem also holds for $k=0$, as in \cite{Ad} and
\cite{Land}, but our proof does not.  The reason for this, in a very
similar case, is explained in \cite{JL}.  
\end{remark}

In view of the analogy between foliations and involutive structures it is
natural to try to place into this new context Bott's necessary
condition for a subbundle of $TM$ to be homotopic to a foliation.  See
\cite{Bott68}, \cite{BottICM}, or \cite{BottLectures} for Bott's original argument 
and some of its further consequences.  

\end{document}